\title{Equivariant zeta functions for invariant Nash germs}
\author{Fabien Priziac}
\date{}
\newtheorem{de}{Definition}[section]
\newtheorem{theo}[de]{Theorem}
\newtheorem{prop}[de]{Proposition}
\newtheorem{lem}[de]{Lemma}
\newcommand{\Hom}{\text{Hom}}
\theoremstyle{remark}
\newtheorem{rem}[de]{Remark}
\newtheorem{ex}[de]{Example}
\begin{document}

\maketitle

\begin{abstract} To any Nash germ invariant under right composition with a linear action of a finite group, we associate its equivariant zeta functions, inspired from motivic zeta functions, using the equivariant virtual Poincar\'e series as a motivic measure. We show Denef-Loeser formulae for the equivariant zeta functions and prove that they are invariants for equivariant blow-Nash equivalence via equivariant blow-Nash isomorphisms. Equivariant blow-Nash equivalence between invariant Nash germs is defined as a generalization involving equivariant data of the blow-Nash equivalence.  
\end{abstract}
\footnote{
Keywords : Nash germs, group action, blow-Nash equivalence, zeta functions, equivariant virtual Poincar\'e series, Denef-Loeser formula.
\\
{\it 2010 Mathematics Subject Classification :} 14B05, 14P20, 14P25, 32S15, 57S17, 57S25.
 
Research supported by a Japan Society for the Promotion of Science (JSPS) Postdoctoral Fellowship.
}

\section{Introduction}

A crucial issue in the study of real analytic germs is the choice of a good equivalence relation by which we can distinguish them. One may think about $C^r$-equivalence, $r=0,1,\ldots,\infty, \omega$. However, the topological equivalence seems, unlike the complex case, not fine enough : for example, all the germs of the form $x^{2m} + y^{2n}$ are topologically equivalent. On the other hand, the $C^1$-equivalence has already moduli : consider the Whitney family $f_t(x,y) = xy(y-x)(y-tx)$, $t >1$, then $f_t$ and $f_{t'}$ are $C^1$-equivalent if and only if $t~=~t'$. In \cite{Kuo}, T.-C. Kuo proposed an equivalence relation for real analytic germs named the blow-analytic equivalence for which, in particular, analytically parametrized family of isolated singularities have a locally finite classification. Roughly speaking, two real analytic germs are said blow-analytically equivalent if they become analytically equivalent after composition with real modifications (e.g., finite successions of blowings-up along smooth centers). With respect to this equivalence relation, Whitney family has only one equivalence class. Slightly stronger versions of blow-analytic equivalence have been proposed so far, by S. Koike and A. Parusi\'nski in \cite{KoiPar} and T. Fukui and L. Paunescu in \cite{FukuiPau} for example. An important feature of blow-analytic equivalence is also that we have invariants for this equivalence relation, like the Fukui invariants (\cite{Fukui}) and the zeta functions (\cite{KoiPar}) inspired by the motivic zeta functions of J. Denef and F. Loeser (\cite{DL-GA}) using the Euler characteristic with compact supports as a motivic measure.

The present paper is interested in the study of Nash germs, that is real analytic germs with semialgebraic graph. In \cite{GF-ZF}, G. Fichou defined an analog adapted to Nash germs of the blow-analytic equivalence of T.-C. Kuo in \cite{Kuo} : two Nash germs are said blow-Nash equivalent if, after composition with Nash modifications, they become analytically equivalent via a Nash isomorphism (if the Nash isomorphism preserves the critical loci of the Nash modifications, it is called a blow-Nash isomorphism). He showed in particular that blow-Nash equivalence is an equivalence relation and that it has no moduli for Nash families with isolated singularities. Using as a motivic measure the virtual Poincar\'e polynomial of C. McCrory and A. Parusi\'nski in \cite{MCP-VB}, extended to the wider category of $\mathcal{AS}$ sets (\cite{Kur} and \cite{KP}) by G. Fichou in \cite{GF-MI}, one can generalize the zeta functions of S. Koike and A. Parusi\'nski in \cite{KoiPar}. In \cite{GF-ZF}, G. Fichou showed that these latter zeta functions are invariants for blow-Nash equivalence via blow-Nash isomorphisms. 

In this paper, we consider Nash germs invariant under right composition with a linear action of a finite group. We define for such germs a generalization of the blow-Nash equivalence of $\cite{GF-ZF}$ involving equivariant data. If $G$ is a finite group acting linearly on $\mathbb{R}^d$ and trivially on $\mathbb{R}$, we say that two equivariant, or invariant, Nash germs $f, h : (\mathbb{R}^d,0) \rightarrow (\mathbb{R},0)$ are $G$-blow-Nash equivalent if there exist two equivariant Nash modifications $\sigma_f : (M_f, \sigma_f^{-1}(0)) \rightarrow (\mathbb{R}^d,0)$ and $\sigma_h : (M_h, \sigma_h^{-1}(0)) \rightarrow (\mathbb{R}^d,0)$ of $f$ and $h$ and an equivariant Nash isomorphism $\Phi : (M_f, \sigma_f^{-1}(0)) \rightarrow (M_h, \sigma_h^{-1}(0))$ which induces an equivariant homeomorphism $\phi : (\mathbb{R}^d,0) \rightarrow (\mathbb{R}^d,0)$ such that $f = h \circ \phi$ (Definition \ref{defequivbln}). If $\Phi$ preserves the critical loci of $\sigma_f$ and $\sigma_h$, we say that $\Phi$ is an equivariant blow-Nash isomorphism. We consider the equivalence relation generated by the equivariant blow-Nash equivalence, which allows refinement of the non-equivariant blow-Nash classification. For example, consider the germs $y^4 - x^2$ and $x^4 - y^2$. They are Nash equivalent but we show in Example \ref{exgermnonequivbln} that they are not $G$-blow-Nash equivalent via an equivariant blow-Nash isomorphism if $G = \{1, s\}$ with $s$ the involution given by $(x,y) \mapsto (-x,y)$.

Our main interest is the construction of invariants for $G$-blow-Nash equivalence via equivariant blow-Nash isomorphism. We associate to any invariant Nash germ its equivariant zeta functions : they are defined using the equivariant virtual Poincar\'e series of G. Fichou in \cite{GF} as an equivariant motivic measure on its arc spaces equipped with the induced action of $G$ (section \ref{subsecequivzetafunc}). It is a generalization of the zeta functions defined in \cite{GF-MI} and \cite{GF-ZF}, and they are different from the equivariant zeta functions defined in \cite{GF}. We then prove the rationality of the equivariant zeta functions by Denef-Loeser formulae (Propositions \ref{equivDLform} and \ref{equivDLformsigns}). One has to keep attention on the behaviour of the induced actions of $G$ on all the spaces involved in the demonstrations of the formulae. A key point is the proof of the validity of Kontsevich ``change of variables formula'' (\cite{Kon}) in this equivariant setting (Proposition \ref{Konequiv}).   

Finally, we compute the equivariant zeta functions of several invariant Nash germs (section \ref{examples}). We are in particular interested in the invariant Nash germs induced from the normal forms of the simple boundary singularities of manifolds with boundary (see \cite{AGZV}). In a subsequent work, we plan to study the simple boundary singularities of Nash manifolds with boundary and classify them with respect to equivariant blow-Nash equivalence.
\\

We begin this paper by the definition of $G$-blow-Nash equivalence for $G$ a finite group. We make also precise what we mean by an equivariant modification of an invariant Nash germ.

In section \ref{sectionequivzetafunc}, we define the equivariant zeta functions (naive and with signs) of an invariant Nash germ. We first recall the definition of the $G$-equivariant virtual Betti numbers~: they are the unique additive invariants on the category of $\mathcal{AS}$ sets equipped with an algebraic action of $G$ which coincide with the dimensions of equivariant Borel-Moore homology with $\mathbb{Z}_2$-coefficients on compact nonsingular sets. In subsection \ref{parequivDLform}, we prove an equivariant version of Kontsevich ``change of variables formula'' and Denef-Loeser formulae for equivariant zeta functions.

In section \ref{secezfebne}, we show that the equivariant zeta functions are invariant under equivariant blow-Nash equivalence via equivariant blow-Nash isomorphisms, illustrating this result with the example of the Nash germs $y^4 - x^2$ and $x^4 - y^2$ invariant under the involution $(x,y) \mapsto (-x,y)$. The computation of the equivariant zeta functions of several other invariant Nash germs concludes the paper. 
\\
  
{\bf Acknowledgements.} The author wishes to thank G. Fichou and T. Fukui for useful discussions and comments.

\section{Equivariant blow-Nash equivalence} \label{equivbnequiv}

Let $G$ be a finite group.

We are interested in the study of germs of Nash functions invariant under some linear action of $G$ on the source space. More precisely, we want to make progress towards the classification of such germs up to equivariant equivalence. We define below in \ref{defequivbln} some generalization of the blow-Nash equivalence defined by G. Fichou in \cite{GF-ZF}, taking into account the equivariant data of this setting. 

Let us first make precise definitions in the equivariant setting. Let $d \geq 1$ and equip the affine space $\mathbb{R}^d$ with a linear action of $G$ and the real line $\mathbb{R}$ with the trivial action of $G$. In this setting, a germ of an equivariant Nash function $f : (\mathbb{R}^d,0) \rightarrow (\mathbb{R}, 0)$ will be called an equivariant or invariant Nash germ.

An equivariant Nash modification of such a germ $f$ will be an equivariant proper surjective Nash map $\pi : (M, \pi^{-1}(0)) \rightarrow (\mathbb{R}^d,0)$ between $G$-globally stabilized semialgebraic and analytic neighbourhoods of $\pi^{-1}(0)$ in $M$ and $0$ in $\mathbb{R}^d$, such that
\begin{enumerate}
	\item $M$ is a Nash manifold equipped with an algebraic action of $G$ (that is an action induced from a regular $G$-action on the Zariski closure of $M$), given by algebraic isomorphisms $\delta_g$, $g \in G$,
	\item the equivariant complexification $\pi(\mathbb{C}) : M(\mathbb{C}) \rightarrow \mathbb{C}^d$ is an equivariant biholomorphism outside some subset of $M(\mathbb{C})$ of codimension at least $1$, globally stabilized by the complexified action of $G$ on $M(\mathbb{C})$,
	\item $\pi$ is an isomorphism outside the zero locus of $f$, 
	\item the irreducible components of $(f \circ \pi)^{-1}(0)$ which are not exceptional divisors of $\pi$ do not intersect,  
	\item the action of $G$ on $M$ preserves globally each exceptional divisor of $\pi$,
%	\item the action of $G$ on $M$ preserves the multiplicities of the jacobian determinant of $\pi$ along its exceptional divisors,
	\item the composition $f \circ \pi$ and the jacobian determinant $jac~\pi$ of $\pi$ have only normal crossings simultaneously, on which the action of $G$ on $M$ can be locally linearized in the following meaning :
\\

	Let $(f \circ \pi)^{-1}(0) = \bigcup_{j \in J} E_j$ be the decomposition of $(f \circ \pi)^{-1}(0)$ into irreducible components. For $I \subset J$, we denote $E_I := \bigcap_{i \in I} E_i$. We ask that for any $I \subset J$ with $|I| \leq d$, for any element $x$ of $E_I$, there exists an affine open neighborhood $U_x$ of $x$ in $M$, an affine open neighborhood $V_x$ of $0$ in $\mathbb{R}^d$, with coordinates $y_1, \ldots, y_d$ and a Nash isomorphism $\varphi_x : V_x \rightarrow U_x$ (in the sense of \cite{GF-MI}) such that
	\begin{enumerate}
		\item for all $i \in I$, there exists $j_i \in \{1, \cdots, d\}$, such that
			\begin{itemize} 
				\item $E_i \cap U_{x} = \varphi_{x}(\{y_{j_i} = 0\} \cap V_{x})$,
				\item $f \circ \pi(\varphi_x(y_1, \ldots, y_d)) = unit(y_1, \ldots, y_d)~\prod_{i \in I} y_{j_i}^{N_{j_i}}$,
				\item $jac~\pi(\varphi_x(y_1, \ldots, y_d)) = unit(y_1, \ldots, y_d)~\prod_{i \in I} y_{j_i}^{\nu_{j_i} - 1}$,
			\end{itemize}
		\item for all $g \in G$, $\delta_g(E_i) \cap \delta_g(U_{x}) = \varphi_{g \cdot x}(\{y_{j_i} = 0\} \cap V_{g \cdot x})$,
		\item for all $g \in G$, $\delta_g(U_x) = U_{g \cdot x}$ and there exists a linear isomorphism $\nu_{x,g} : \mathbb{R}^d \rightarrow \mathbb{R}^d$ such that $\nu_{x,g}(V_x) = V_{g \cdot x}$ making the following diagram commute :
		$$\xymatrix{
V_x \ar[rr]^{\varphi_x} \ar[d]_{\nu_{x,g}} && U_x \ar[d]^{\delta_g} \\
V_{g \cdot x} \ar[rr]^{\varphi_{g \cdot x}} && U_{g\cdot x}
}
$$ 
		\item if $\delta_g(E_I) = E_I$, $U_{g \cdot x} = U_x$, $V_{g \cdot x} = V_x$ and $\varphi_{g \cdot x} = \varphi_x$, 
%		\item if $I$ contains (exactly) an element $k$ of $J \setminus K$, for all $g \in G$, $\delta_g(E_k) \cap U_{g \cdot x} = \varphi_{g \cdot x} (\{ y_{j_k} = 0 \} \cap V_{g \cdot x})$ and $E_k \cap U_{g \cdot x} = \varphi_{g \cdot x} (\{ y_{j_i} = 0 \} \cap V_{g \cdot x})$,
%		\item for all $g_1$, $g_2$ of $G$, $\nu_{g_2 \cdot x,g_1} \circ \nu_{x,g_2} = \nu_{x, g_1 g_2}$,
		\item for all $g \in G$, $\nu_{x,g}$ preserves the intersection of the hyperplanes $\{y_s = 0\}$, $s \notin \{j_i, i \in I\}$, 
%onto the intersection of the hyperplanes $\{y_r = 0\}$, $r \notin \{j_{g \cdot i}, i \in I\}$,
		\item for all $g \in G$, the linear isomorphisms $\nu_{h \cdot x,g}$, $h \in G$, are all given by the same matrix $A_{x,g}$ in the canonical bases of $\mathbb{R}^d \supset V_{h \cdot x}$ and $\mathbb{R}^d \supset V_{gh \cdot x}$,
		\item all these conditions come from the semialgebraic and analytic isomorphisms between compact semialgebraic and real analytic sets inducing the Nash isomorphisms $\varphi_x$.
	\end{enumerate} 
\end{enumerate}

	%The action of $G$ on $M$ induces an action of $G$ on the set of irreducible components of $(f \circ \sigma)^{-1}(0)$. For $j_1, j_2 \in J$ and $g \in G$, we write the equality $j_2 = g \cdot j_1$ if $E_{j_2} = g \cdot E_{j_1}$. 
	%This induces an action of $G$ on the set $\Lambda$ of non-empty subsets of $J$ and we denote by $\underline{I}$ the orbit of a non-empty subset $I$ of $J$. For $\underline{I}$ in $\Lambda/G$, we then denote by $E_{\underline{I}}$ the union of the sets $E_{g \cdot I} = g \cdot E_I = \bigcap_{i \in I} g \cdot E_i$, $g \in G$ (it is the orbit of $E_I$ in $M$). 

\begin{de} \label{defequivbln} Let $f,h : (\mathbb{R}^d,0) \rightarrow (\mathbb{R}, 0)$ be two invariant Nash germs. We say that $f$ and $h$ are $G$-blow-Nash equivalent if there exist
\begin{itemize}
	\item two equivariant Nash modifications $\sigma_f : (M_f, \sigma_f^{-1}(0)) \rightarrow (\mathbb{R}^d,0)$ and $\sigma_h : (M_h, \sigma_h^{-1}(0)) \rightarrow (\mathbb{R}^d,0)$ of $f$ and $h$ respectively,
	\item an equivariant Nash isomorphism $\Phi$ between $G$-globally stabilized semialgebraic and analytic neighbourhoods $(M_f, \sigma_f^{-1}(0))$ and $(M_h, \sigma_h^{-1}(0))$,
	\item an equivariant homeomorphism $\phi : (\mathbb{R}^d,0) \rightarrow (\mathbb{R}^d,0)$,
\end{itemize}
such that the following diagram commutes :
$$\xymatrix{
(M_f, \sigma_f^{-1}(0)) \ar[rr]^{\Phi} \ar[d]_{\sigma_f} && (M_h, \sigma_h^{-1}(0)) \ar[d]^{\sigma_h} \\
(\mathbb{R}^d,0) \ar[rr]^{\phi} \ar[rd]^f && (\mathbb{R}^d,0) \ar[ld]_h \\
&(\mathbb{R},0)
}
$$

In this case, we say that $\phi$ is an equivariant blow-Nash homeomorphism, and if $\Phi$ preserves the multiplicities of the jacobian determinant of $\sigma_f$ and $\sigma_g$ along their exceptional divisors, then we say that $\Phi$ is an equivariant blow-Nash isomorphism.
\end{de}

\begin{rem} 
\begin{itemize}
	\item If $G = \{e\}$, the equivariant blow-Nash equivalence is the blow-Nash equivalence defined in \cite{GF-ZF}.
	\item There exist germs being blow-Nash equivalent via a blow-Nash isomorphism without being $G$-blow Nash equivalent via an equivariant blow-Nash isomorphism : see Example \ref{exgermnonequivbln}.	
\end{itemize}
\end{rem}

	In the following, we will also call $G$-blow-Nash equivalence (resp. $G$-blow-Nash equivalence via an equivariant blow-Nash isomorphism) the equivalence relation generated by the $G$-blow-Nash equivalence (resp. $G$-blow-Nash equivalence via an equivariant blow-Nash isomorphism) defined in Definition \ref{defequivbln}. Notice that the $G$-blow-Nash equivalence can be defined if $G$ is an infinite group as well.

%\begin{prop}
%The equivariant blow-Nash equivalence is an equivalence relation for invariant Nash germs.
%\end{prop}

%\begin{proof}
%The proof runs as in \cite{GF-ZF}, involving in our framework equivariant data, since we can equip the fiber product of equivariant maps with the induced diagonal action, making projection maps equivariant. Notice that the action induced by $G$ on the fiber products preserves the jacobian determinants of the associated projections. 
%\end{proof}

%\begin{rem}
%The $G$-blow-Nash equivalence can also be defined if $G$ is an infinite group, and it is still an equivalence relation in this case.
%\end{rem}

\section{Equivariant zeta functions} \label{sectionequivzetafunc}

Let $G$ be a finite group.
\\

We are interested in the classification of Nash germs invariant under right composition with a linear action of $G$, with respect to the equivariant blow-Nash equivalence. With this in mind, we generalize the zeta functions defined in \cite{GF-MI} to our equivariant setting, using the equivariant virtual Poincar\'e series defined in \cite{GF}. We show in Proposition \ref{equivDLform} the rationality of our equivariant zeta functions by a Denef-Loeser formula, which allows us to prove that they are invariants for equivariant blow-Nash equivalence via an equivariant blow-Nash isomorphism (Theorem \ref{equivzetafuncinv}).

\subsection{Equivariant virtual Poincar\'e series} 

In order to define ``equivariant'' generalizations of the zeta functions for Nash germs, we use an additive invariant defined on all $G$-$\mathcal{AS}$ sets, that is boolean combinations of arc-symmetric sets (see \cite{Kur} and \cite{KP}) equipped with an algebraic action of $G$ : the equivariant virtual Poincar\'e series. It is defined in \cite{GF} using the equivariant virtual Betti numbers, which are the unique additive invariant on $G$-$\mathcal{AS}$ sets coinciding with the dimensions of their equivariant homology. In this subsection, we recall the results of G. Fichou in \cite{GF} about equivariant Betti numbers. We first give the definition of equivariant homology which is a mix of group cohomology and Borel-Moore homology.

\begin{de} Let $\mathbb{Z}_2[G]$ denote the group ring of $G$ over $\mathbb{Z}_2$, that is 
$$\mathbb{Z}_2[G]=\left\{\sum_{g \in G} n_g g~|~g \in G \right\}$$
equipped with the induced ring structure. Consider a projective resolution $(F_*, \Delta_*)$ of $\mathbb{Z}_2$ by $\mathbb{Z}_2[G]$-modules, that is vector spaces over $\mathbb{Z}_2$ equipped with a linear action of $G$. Then we define the cohomology $H^*(G,M)$ of the group $G$ with coefficients in a $\mathbb{Z}_2[G]$-module $M$ to be the cohomology of the cochain complex
$$\left(\rm{Hom}_{\mathbb{Z}_2[G]}(F_*, M), \Delta^*\right)$$  
where, if $\varphi : F_k \rightarrow M$ is an equivariant linear morphism, $\Delta^k(\varphi) := \varphi \circ \Delta_{k+1}$.
\end{de}

\begin{ex} Let $G$ be a finite cyclic group of order $d$ generated by $s$. We denote by $N := \sum_{1 \leq i \leq d} s^i$. Then a projective resolution of $\mathbb{Z}_2$ by $\mathbb{Z}_2[G]$-modules is given by
$$\cdots \rightarrow \mathbb{Z}_2[G] \xrightarrow{1+s} \mathbb{Z}_2[G] \xrightarrow{N} \mathbb{Z}_2[G] \xrightarrow{1+s} \mathbb{Z}_2[G] \rightarrow  \mathbb{Z}_2 \rightarrow 0,$$
where the map $ \mathbb{Z}_2[G] \rightarrow  \mathbb{Z}_2$ associates to an element $\sum_{1 \leq i \leq d} n_i s^i$ of $\mathbb{Z}_2[G]$ the element $\sum_{1 \leq i \leq d} n_i$ of $\mathbb{Z}_2$.

The cohomology of the group $G$ with coefficients in a $\mathbb{Z}_2[G]$-module $M$ is
$$H^n(G,M) =
\begin{cases}
\frac{M^G}{N M} & \mbox{   if $n$ is an even positive integer,} \\
\frac{ker~N}{(1+s) M } & \mbox{   if $n$ is an odd positive integer,} \\ 
M^G & \mbox{   if $n =0 $}
\end{cases}
$$
(where $M^G$ denotes the set of elements of $M$ which are fixed by the action of $G$). In particular, if $G = \mathbb{Z}/2 \mathbb{Z}$, 
$$H^n(G,M) = 
\begin{cases}
\frac{M^G}{(1 + s)M} & \mbox{   if $n > 0$,} \\ 
M^G  & \mbox{   if $n =0 $.}
\end{cases}
$$
\end{ex} 
For more details about group cohomology see for instance \cite{Bro}.
\\

The equivariant homology of $G$-$\mathcal{AS}$ sets we define below is inspired by \cite{VH}. 

Recall that a semialgebraic subset $S$ of $\mathbb{P}^n(\mathbb{R})$ is said to be arc-symmetric if every real analytic arc in $\mathbb{P}^n(\mathbb{R})$ either meets $S$ at isolated points or is entirely included in $S$. An $\mathcal{AS}$ set is a boolean combination of arc-symmetric sets.

Take $X$ an $\mathcal{AS}$ set equipped with an algebraic action of $G$, that is an action induced from a regular $G$-action on its Zariski closure : we will call such a set a $G$-$\mathcal{AS}$ set. We can associate to $X$ the complex $(C_*(X), \partial_*)$ of its semialgebraic chains with closed supports and $\mathbb{Z}_2$ coefficients, which computes the Borel-Moore homology of $X$ with $\mathbb{Z}_2$ coefficients, simply denoted by $H_*(X)$~: see Appendix of \cite{MCP}. The action of $G$ on $X$ induces by functoriality a $G$-action on the chain complex $C_*(X)$ (linear action on chains in each dimension and commutativity with the differential). We then consider the double complex
$$(\Hom_{\mathbb{Z}_2[G]}(F_{-p}, C_q(X)))_{p,q \in \mathbb{Z}},$$ 
where $(F_*, \Delta_*)$ is a projective resolution of $\mathbb{Z}_2$ by $\mathbb{Z}_2[G]$-modules, where the differentials are induced by $\Delta_*$ and $\partial_*$.
\\

The equivariant Borel-Moore homology $H_*(X ; G)$ of $X$ (with $\mathbb{Z}_2$ coefficients) is then by definition the homology of the total complex associated to the above double complex. 
\\

Such a double complex induces two spectral sequences that converge to the homology of the associated total complex. In particular, the spectral sequence given by
$$E^2_{p,q} = H^{-p}(G, H_q(X)) \Rightarrow H_{p+q}(X ; G),$$
is called the Hochschild-Serre spectral sequence of $X$ and $G$.
 
It gives the following wiewpoint on the equivariant Borel-Moore homology : it is a mix of group cohomology and Borel-Moore homology with $\mathbb{Z}_2$ coefficients, involving the geometry of $X$, the geometry of the action of $G$ and the geometry of the group $G$ itself.

\begin{ex} \label{exsphere} To illustrate how the equivariant geometry is involved in the equivariant homology, let us compute the equivariant homology of the two-dimensional sphere, given by the equation $x^2 + y^2 + z^2 = 1$ in $\mathbb{R}^3$ and denoted by $X$, equipped with two different kind of involutions.

Consider first the action given by the central symmetry $s : (x,y,z) \mapsto (-x,-y,-z)$. If $G := \{1, s\}$, the $E^2$-term of the Hochschild-Serre spectral sequence of $X$ and $G$ is 

$$\xymatrix{
\cdots & \mathbb{Z}_2[X] & \mathbb{Z}_2[X] & \mathbb{Z}_2[X]  &  \mathbb{Z}_2[X]  \\
\cdots & 0 & 0 &  0 \ar[llu] &  0 \ar[llu] \\
\cdots & \mathbb{Z}_2\overline{[p]} & \mathbb{Z}_2\overline{[p]} &  \mathbb{Z}_2\overline{[p]} \ar[llu] &  \mathbb{Z}_2\overline{[p]} \ar[llu]  
}$$
where $\overline{[p]}$ is the homology class of the chain $[p]$ representing a point $p$ of $X$ : for the sake of simplicity in the computations, we choose $p$ to be the point of coordinates $(1,0,0)$. We see that the differential $d^2$ vanishes everywhere and $E^3$-term is then given by 
$$\xymatrix{
\cdots & \mathbb{Z}_2[X] & \mathbb{Z}_2[X] & \mathbb{Z}_2[X]  &  \mathbb{Z}_2[X]  \\
\cdots & 0 & 0 &  0 &  0  \\
\cdots & \mathbb{Z}_2\overline{[p]} & \mathbb{Z}_2\overline{[p]} &  \mathbb{Z}_2\overline{[p]} \ar[llluu] &  \mathbb{Z}_2\overline{[p]} \ar[llluu]  
}$$
The image of $\overline{[p]}$ by the differential $d^3$ can be obtained by the following procedure. We follow the following ``path'' in the double complex $(\Hom_{\mathbb{Z}_2[G]}(F_{-p}, C_q(X)))_{p,q \in \mathbb{Z}}$ :

$$\xymatrix{
C_2(X)& C_2(X)  \ar[l]^{1+s}  \ar[d]^{\partial_2} & C_2(X)  &  C_2(X) \\
C_1(X) & C_1(X)  &  C_1(X) \ar[l]^{1+s} \ar[d]^{\partial_1} &  C_1(X) \\
C_0(X) & C_0(X) &  C_0(X) &  C_0(X) \ar[l]^{1+s} 
}$$

Apply $1+s$ to the chain $[p]$. There exists a semialgebraic chain $\gamma$ of $C_1(X)$ with boundary $[p] + s([p]) = [\{p, s(p)\}]$ : we can choose $\gamma$ to be the chain representing an arc of the equator $\{z=0\}$ of $X$. The image of $\gamma$ by $1+ s$ is the chain representing the whole equator $\{z=0\}$, which is the semialgebraic boundary of the half-sphere $\{z \geq 0\}$. Finally, if we apply $1+ s$ to the chain representing the half-sphere, we obtain the chain $[X]$ representing the whole sphere. Therefore $d^3(\overline{[p]}) = [X]$. 

Consequently, 
$$E^{\infty}_{p,q} = E^4_{p,q} = \begin{cases} \mathbb{Z}_2[X] \mbox{ if $q = 2$ and $-2 \leq p \leq 0$,} \\
0 \mbox{ otherwise,}
\end{cases}$$
and $H_n(X ; G) = \begin{cases} \mathbb{Z}_2 \mbox{ if $0 \leq n \leq 2$,} \\
0 \mbox{ otherwise.}
\end{cases}$
\\

Now let $s$ denote an involution on $X$ which is not free : this means there exists at least one point $p_0$ of $X$ that is fixed by $s$. If we look at the $E^3$-term of the Hochschild-Serre spectral sequence of $X$ with respect to this action of $G = \mathbb{Z}/2\mathbb{Z}$, we see that the differential $d^3$ vanishes everywhere since $H_0(X) = \mathbb{Z}_2 \overline{[p_0]}$ and $(1+ s)[p_0] =0$. Thus, $E^{\infty} = E^2$ and
$$H_n(X ; G) = \begin{cases} \mathbb{Z}_2 \mbox{ if $0 \leq n \leq 2$,} \\
\mathbb{Z}_2 \oplus \mathbb{Z}_2 \mbox{ if $n \leq 0$,} \\
0 \mbox{ otherwise.}
\end{cases}$$  
\end{ex}

\begin{rem}
\begin{itemize}
	\item When $G = \{e\}$, the equivariant homology of a $G$-$\mathcal{AS}$ set $X$ is the Borel-Moore homology of $X$.
	\item As illustrated in Example \ref{exsphere}, the equivariant homology groups can be non-zero in negative degree. In the case $G = \mathbb{Z}/2\mathbb{Z}$, we actually have $H_n(X ; G) \cong \oplus_{i \geq 0} H_i(X^G)$ for $n < 0$ (where $X^G$ is the set of the points of $X$ which are fixed by the action of $G$).
\end{itemize}
\end{rem}

For more details about equivariant Borel-Moore homology, see \cite{VH}, \cite{Der}, \cite{GF} and \cite{Pri-EWF}.
\\

The existence and uniqueness of the equivariant Betti numbers are given by the following theorem of G. Fichou in \cite{GF}. The equivariant virtual Betti numbers and the equivariant virtual Poincar\'e series are additive invariants under equivariant Nash isomorphisms of $G$-$\mathcal{AS}$ sets. By a Nash isomorphism between $\mathcal{AS}$-sets $X_1$ and $X_2$ is meant the restriction of a semialgebraic and analytic isomorphism between compact real analytic and semialgebraic sets $Y_1$ and $Y_2$ containing $X_1$ and $Y_2$ respectively (see also \cite{GF-MI}). 

We will use the equivariant virtual Poincar\'e series as a measure for arc spaces which takes into account equivariant information. In particular, we will apply it to the spaces of arcs of an invariant Nash germ and gather these measures in the equivariant zeta functions (subsection \ref{subsecequivzetafunc}).

\begin{theo}[Theorem 3.9 of \cite{GF}] \label{eqvirpoiser} Let $i \in \mathbb{Z}$. There exists a unique map $\beta^G_i(\cdot)$ defined on $G$-$\mathcal{AS}$ sets and with values in $\mathbb{Z}$ such that
\begin{enumerate}
	\item $\beta_i^G(X_1) = \beta_i^G(X_2)$ if $X_1$ and $X_2$ are equivariantly Nash isomorphic,
  \item $\beta_i^G(X) = \dim_{\mathbb{Z}_2} H_i(X;G)$ if $X$ is a compact nonsingular $G$-$\mathcal{AS}$ set,
	\item $\beta_i^G(X) = \beta_i^G(Y) + \beta_i^G(X \setminus Y)$ if $Y \subset X$ is an equivariant closed inclusion,
	\item $\beta_i^G(V) = \beta_i^G( \mathbb{R}^n \times X)$ with $G$ acting diagonally on the right-hand product, $\mathbb{R}^n$ being equipped with the trivial action of $G$, if $V \rightarrow X$ is a $G$-equivariant vector bundle with fiber $\mathbb{R}^n$, i.e. the restriction to $X$ of a vector bundle with fiber $\mathbb{R}^n$ on its Zariski closure $\overline{X}^{Z}$, with a linear $G$-action over the action on $\overline{X}^{Z}$ (this means there exists a finite partition of $\overline{X}^{Z}$ into $G$-globally invariant Zariski constructible sets on which the vector bundle is trivial and the action of $G$ sends linearly a fiber on another).  
\end{enumerate} 
The map $\beta^G_i(\cdot)$ is unique with these properties and is called the $i$-th equivariant virtual Betti number.

For $X$ a $G$-$\mathcal{AS}$ set, we then denote
$$\beta^G(X) := \sum_{i \in \mathbb{Z}} \beta^G_i(X) u^i \in \mathbb{Z}[u][[u^{-1}]]$$
the equivariant virtual Poincar\'e series of $X$.
\end{theo}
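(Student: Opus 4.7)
The plan is to follow the scheme that G. Fichou used in the non-equivariant setting \cite{GF-MI}, enriched with the group-theoretic data needed to handle the $G$-action on $\mathcal{AS}$ sets. Uniqueness is essentially formal, while existence combines equivariant resolution of singularities with an equivariant weak factorization argument.

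For uniqueness, suppose two maps $\beta_i^G$ and $\tilde\beta_i^G$ both satisfy (1)--(4). Using equivariant resolution of singularities in the real algebraic category (available through Hironaka and Bierstone--Milman with a finite group action), any $G$-$\mathcal{AS}$ set $X$ can be written, by iterated use of property (3) on $G$-invariant locally closed decompositions, as a finite $\mathbb{Z}$-combination of classes of compact nonsingular $G$-$\mathcal{AS}$ sets. On those pieces property (2) pins down the values, so the two maps must agree on $X$.

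For existence, I would first define $\beta_i^G(Y) := \dim_{\mathbb{Z}_2} H_i(Y;G)$ on compact nonsingular $G$-$\mathcal{AS}$ sets, then extend: given an arbitrary $G$-$\mathcal{AS}$ set $X$, choose an equivariant compactification $X \subset \bar X$, an equivariant resolution $\bar Y \to \bar X$ whose exceptional locus has normal crossings, and declare $\beta_i^G(X)$ to be the alternating sum of equivariant virtual Betti numbers of the compact nonsingular pieces forced by (3). Property (4) is used to identify the equivariant virtual contribution of tubular neighborhoods of smooth invariant strata with that of trivial bundles, and (2) supplies the base case on each compact nonsingular stratum.

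The main obstacle is well-definedness: the value $\beta_i^G(X)$ produced by this recipe must be independent of the chosen compactification and resolution. The natural tool is an equivariant weak factorization theorem for smooth $G$-varieties, which connects any two resolutions by a sequence of blow-ups and blow-downs along smooth $G$-invariant centers. For a single equivariant blow-up $\pi : \tilde Y \to Y$ with smooth $G$-invariant center $C$ and exceptional divisor $E = \pi^{-1}(C)$, iterated use of (3) reduces the task to proving
$$\beta_i^G(\tilde Y) - \beta_i^G(E) = \beta_i^G(Y) - \beta_i^G(C).$$
The delicate point is that even when the normal bundle of $C$ is equivariantly non-trivial, the projective bundle $E \to C$ should be decomposable, via a $G$-invariant filtration, into $G$-invariant affine subbundles on which (4) forces the contribution to match that of a trivial bundle with trivial $G$-action on the fibers. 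Combined with the Hochschild--Serre spectral sequence, this yields the blow-up formula; the subtlety is ensuring at each step that the group-cohomological piece of equivariant Borel--Moore homology behaves correctly, which is where a naive transplantation of the non-equivariant argument could fail.
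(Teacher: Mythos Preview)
The paper does not prove this theorem: it is quoted verbatim as Theorem~3.9 of \cite{GF} and used as a black box, so there is no proof in the present paper to compare your proposal against. Your sketch is in line with the strategy actually carried out in \cite{GF} (uniqueness from equivariant resolution plus additivity; existence via an equivariant weak factorization / blow-up argument reducing to compact nonsingular $G$-$\mathcal{AS}$ sets), so nothing you wrote is off-track, but for the purposes of this paper no proof is required---a citation suffices.
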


\begin{rem} 
\begin{itemize}
	\item For $G = \{e\}$, the equivariant virtual Poincar\'e series is the virtual Poincar\'e polynomial defined in \cite{MCP-VB}.
	\item The assumption of finiteness of the group $G$ is necessary to show the existence of the equivariant virtual Betti numbers. In particular, when $G$ is finite, there always exist an equivariant resolution of singularities (\cite{Villa}, \cite{BM}) and an equivariant compactification (see \cite{DL}).
\end{itemize}
\end{rem}

\begin{ex} \label{exevps} \begin{enumerate}
\item If we consider the sphere $S^2$ equipped with the central symmetry, since $S^2$ is compact nonsingular, we have
$$\beta^G(S^2) = \sum_{i \in \mathbb{Z}} \dim_{\mathbb{Z}_2} H_i(X;G) u^i = u^2 + u + 1$$
(with $G = \mathbb{Z}/2\mathbb{Z}$). If now we consider an action of $G$ on $S^2$ which fixes at least one point, we have
$$\beta^G(S^2) = u^2 + u + \sum_{i \leq 0} 2 u^i = u^2 + u + 2\frac{u}{u-1}$$
(see Example \ref{exsphere}).

\item The equivariant virtual Poincar\'e series of a point is $\frac{u}{u-1}$ and the equivariant virtual Poincar\'e series of two points inverted by an action of $G = \mathbb{Z}/2\mathbb{Z}$ is $1$ : in both cases, the Hochschild-Serre spectral sequence degenerates at $E^2$-term.

\item Let the affine plane $\mathbb{R}^2$ be equipped with an involution $s$. To compute the equivariant virtual Poincar\'e series $\beta^G(\mathbb{R}^2)$ (with $G = \{1,s\}$), consider an equivariant one-point compactification of $\mathbb{R}^2$. It is equivariantly Nash isomorphic to a sphere $S^2$ equipped with an involution fixing at least the point $S^2 \setminus \mathbb{R}^2$. Therefore
$$\beta^G(\mathbb{R}^2) = \beta^G(S^2) - \beta^G(S^2 \setminus \mathbb{R}^2) = \sum_{i \leq 2} u^i = \frac{u^3}{u-1}.$$

\item Let $\mathbb{R}^2$ be equipped with an action of $G := \mathbb{Z}/2\mathbb{Z}$ given by $s : (x,y) \mapsto (\epsilon x , \epsilon' y)$, with $\epsilon, \epsilon' \in \{-1, 1\}$, and $E$ denote the exceptional divisor of the equivariant blowing-up of the plane at $0$. Then, $\beta^G(E) = \beta^G(\mathbb{P}^1) = \beta^G(S^1)$, where the circle $S^1$ is equipped with an involution fixing at least one point. Then $\beta^G(E) = u + 2\frac{u}{u-1}$ (we compute the Hochschild-Serre spectral sequence of $S^1$). 
\end{enumerate}
\end{ex}

Contrary to the virtual Poincar\'e polynomial, we do not know the behaviour of the equivariant virtual Poincar\'e series towards products in general case. Nevertheless, we have the following result regarding the equivariant virtual Poincar\'e series of the product of a $G$-$\mathcal{AS}$ set with an affine space. We will use the following two properties in the proof of Denef-Loeser formula for the equivariant zeta functions (subsection \ref{parequivDLform}).

\begin{prop}[Proposition 3.13 of \cite{GF}] \label{prodaf} Let $X$ be any $G$-$\mathcal{AS}$ set and equip the affine variety $\mathbb{R}^n$ with any algebraic action of $G$. If we equip their product with the diagonal action of $G$, we have
$$\beta^G(\mathbb{R}^n \times X) = u^n \beta^G(X).$$
In particular, $\beta^G(\mathbb{R}^n) = \frac{u^{n+1}}{u-1}$.
\end{prop}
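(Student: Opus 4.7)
Since $\beta^G$ is characterized by its four axioms (Theorem 3.9), the plan is to exploit Axiom 4 (the vector bundle relation) to reduce to the case of the trivial $G$-action on $\mathbb{R}^n$, and then to handle that case by induction on $n$ together with a uniqueness argument.

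I would first reduce to the trivial-action case on $\mathbb{R}^n$. If $G$ acts linearly on $\mathbb{R}^n$, the projection $\mathbb{R}^n \times X \to X$, endowed with the diagonal action, is a $G$-equivariant vector bundle on $X$, so Axiom 4 gives
\begin{equation*}
\beta^G_i(\mathbb{R}^n \times X) = \beta^G_i(\mathbb{R}^n \times_{\text{triv}} X),
\end{equation*}
with trivial $G$-action on $\mathbb{R}^n$ on the right-hand side. For a general algebraic action of the finite group $G$ on $\mathbb{R}^n$, I would reduce to the linear case by a $G$-invariant stratification of $\mathbb{R}^n$ into locally closed subsets on which the action is equivariantly Nash-isomorphic to a linear one, combined with additivity (Axiom 3).

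It then remains to show $\beta^G(\mathbb{R}^n \times_{\text{triv}} X) = u^n \beta^G(X)$. By induction on $n$, using $\mathbb{R}^n = \mathbb{R} \times \mathbb{R}^{n-1}$, this reduces to the case $n=1$. Fix $i$ and define $\psi_i(X) := \beta^G_{i+1}(X \times \mathbb{R}_{\text{triv}})$. I would check that $\psi_i$ satisfies the four axioms of Theorem 3.9, so that by uniqueness $\psi_i = \beta^G_i$, which is exactly the desired formula coefficient-wise. Invariance and additivity of $\psi_i$ are clear; the vector bundle axiom is inherited from that of $\beta^G$ applied to the bundle $V \times \mathbb{R} \to X \times \mathbb{R}$ with trivial action on the $\mathbb{R}$-factor. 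The non-trivial axiom is Axiom 2: for $X$ compact nonsingular, I would compactify $\mathbb{R} \subset S^1$ with trivial action, use additivity to write
\begin{equation*}
\psi_i(X) = \beta^G_{i+1}(X \times S^1) - \beta^G_{i+1}(X),
\end{equation*}
and compute $\beta^G_{i+1}(X \times S^1) = \dim_{\mathbb{Z}_2} H_{i+1}(X \times S^1 ; G)$ via the Hochschild-Serre spectral sequence. Since the trivial action on $S^1$ renders the Künneth decomposition $H_*(X \times S^1) \cong H_*(X) \oplus H_{*-1}(X)$ $G$-equivariant, the spectral sequence of $X \times S^1$ splits into that of $X$ and a shift, whence $\dim H_{i+1}(X \times S^1 ; G) = \dim H_{i+1}(X ; G) + \dim H_i(X ; G)$ and $\psi_i(X) = \beta^G_i(X)$ on compact nonsingular $X$.

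The main obstacle is to justify the splitting of the Hochschild-Serre spectral sequence of $X \times S^1$ beyond the $E^2$-level. This requires a direct analysis of the double complex $\Hom_{\mathbb{Z}_2[G]}(F_*, C_*(X \times S^1))$ and the observation that $C_*(S^1)$ with trivial action decomposes as a direct sum of trivial $\mathbb{Z}_2[G]$-modules, so that the whole double complex of $X \times S^1$ splits $G$-equivariantly as a direct sum of that of $X$ and a shifted copy. A secondary obstacle is the reduction from a general algebraic $G$-action on $\mathbb{R}^n$ to a linear one, which is manageable for finite groups via equivariant stratification but must be set up carefully to respect the $\mathcal{AS}$ structure.
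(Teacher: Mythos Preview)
The paper does not prove this proposition: it is quoted verbatim as Proposition~3.13 of \cite{GF} and used as a black box, so there is no argument in the paper to compare your proposal against.

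On the substance of your outline: the reduction via Axiom~4 to the trivial action on the fibre and the uniqueness argument defining $\psi_i(X)=\beta^G_{i+1}(X\times\mathbb{R}_{\mathrm{triv}})$ are both sound and standard. Two points deserve more care. First, your reduction from an arbitrary algebraic $G$-action on $\mathbb{R}^n$ to a linear one by ``stratifying into pieces equivariantly Nash-isomorphic to linear pieces'' is not obviously available: even for finite $G$, algebraic actions on affine space need not be globally linearizable, and a stratification into such pieces does not by itself recover a product structure $\mathbb{R}^m\times(\cdot)$ on each stratum. A cleaner route is to observe that Axiom~4 already applies to the trivial bundle $\mathbb{R}^n\times \overline{X}^Z\to \overline{X}^Z$ equipped with any linear $G$-action on the fibre, and then to treat a non-linear algebraic action via an equivariant compactification of $\mathbb{R}^n$ and additivity rather than a stratification. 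Second, the splitting of the Hochschild--Serre spectral sequence for $X\times S^1$ is the real content of Axiom~2 for $\psi_i$; your sketch is correct in spirit, but the clean justification is that the equivariant projection $X\times S^1\to X$ admits an equivariant section, giving a retract $H_*(X;G)\hookrightarrow H_*(X\times S^1;G)$, while the complementary summand is identified via the long exact sequence of the pair $(X\times S^1,\,X\times\{\mathrm{pt}\})$ in equivariant Borel--Moore homology. This avoids having to control higher differentials directly.
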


\begin{lem} \label{lemprod} Let $X$ be any $G$-$\mathcal{AS}$ set and equip the real line $\mathbb{R}$ with any algebraic variety action of $G$ stabilizing $0$. Now let $m \in \mathbb{N}^*$ and equip the product $(\mathbb{R}^*)^m \times X$ with the induced diagonal action of $G$. Then we have
$$\beta^G\left((\mathbb{R}^*)^m \times X\right) = (u-1)^m \beta^G(X).$$
\end{lem}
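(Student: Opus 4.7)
The plan is to induct on $m$, starting from the observation that the $G$-action on $\mathbb{R}$ stabilises $0$, which ensures that the decomposition $\mathbb{R} = \{0\} \sqcup \mathbb{R}^*$ is $G$-equivariant.

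For the base case $m=1$, the point $\{0\}$ is $G$-invariant and closed in $\mathbb{R}$, so $\{0\} \times X \hookrightarrow \mathbb{R} \times X$ is an equivariant closed inclusion. By the additivity property of the equivariant virtual Betti numbers (and hence of $\beta^G$), I would write
$$\beta^G(\mathbb{R} \times X) = \beta^G(\{0\} \times X) + \beta^G(\mathbb{R}^* \times X).$$
Since $\{0\} \times X$ is equivariantly Nash isomorphic to $X$, and since Proposition \ref{prodaf} (applied to $n=1$, with the action of $G$ on $\mathbb{R}$ of the statement) gives $\beta^G(\mathbb{R} \times X) = u\, \beta^G(X)$, we conclude
$$\beta^G(\mathbb{R}^* \times X) = u\,\beta^G(X) - \beta^G(X) = (u-1)\,\beta^G(X).$$

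For the inductive step, assuming the result for $m-1$, I would set $Y := (\mathbb{R}^*)^{m-1} \times X$ equipped with the induced diagonal $G$-action, and observe that $(\mathbb{R}^*)^m \times X$ is equivariantly identified with $\mathbb{R}^* \times Y$ (again with the diagonal action, where $\mathbb{R}^*$ carries the restriction of the given $G$-action on $\mathbb{R}$). Applying the base case to $Y$ yields
$$\beta^G((\mathbb{R}^*)^m \times X) = \beta^G(\mathbb{R}^* \times Y) = (u-1)\,\beta^G(Y),$$
and the induction hypothesis gives $\beta^G(Y) = (u-1)^{m-1}\beta^G(X)$, whence the claim.

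I do not anticipate any serious obstacle: the only point that must be checked carefully is that throughout the induction, the decomposition $\mathbb{R} \times Y = (\{0\} \times Y) \sqcup (\mathbb{R}^* \times Y)$ remains a $G$-equivariant closed inclusion and that the diagonal $G$-actions match up so that Proposition \ref{prodaf} applies to $\mathbb{R} \times Y$. Both follow from the hypothesis that $0 \in \mathbb{R}$ is fixed by $G$, which is the reason this hypothesis is included in the statement.
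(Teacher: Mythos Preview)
Your proof is correct and follows essentially the same approach as the paper: induction on $m$, with the base case obtained from additivity applied to the equivariant closed inclusion $\{0\}\times X \hookrightarrow \mathbb{R}\times X$ together with Proposition~\ref{prodaf}, and the inductive step by peeling off one $\mathbb{R}^*$ factor. Your write-up is somewhat more explicit about why the hypothesis that $0$ is $G$-fixed is needed, but the argument is the same.
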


\begin{proof}
We prove this equality by induction on $m$ : we have 
$$\beta^G\left(\mathbb{R}^* \times X\right) = \beta^G\left(\mathbb{R} \times X\right) - \beta^G\left(\{0\} \times X\right) = (u-1)\beta^G(X)$$
by Proposition \ref{prodaf}, and, if we assume the property to be true for a fixed $m \in \mathbb{N}^*$,
$$\beta^G\left((\mathbb{R}^*)^{m+1} \times X\right) = \beta^G\left((\mathbb{R}^*) \times (\mathbb{R}^*)^{m} \times X\right) = (u-1) \beta^G\left((\mathbb{R}^*)^{m} \times X\right) = (u-1)^{m+1} \beta^G(X).$$
\end{proof}

\subsection{Equivariant zeta functions} \label{subsecequivzetafunc}

Consider a linear action of $G$ on $\mathbb{R}^d$, given by linear isomorphisms $\alpha_g$, $g \in G$, and equip $\mathbb{R}$ with the trivial action of $G$. The space $\mathcal{L} = \mathcal{L}(\mathbb{R}^d,0)$ of formal arcs $(\mathbb{R}, 0) \rightarrow (\mathbb{R}^d,0)$ at the origin of $\mathbb{R}^d$ is naturally equipped with the induced action of $G$ given by 
$$g \cdot \gamma := t \mapsto \alpha_g (\gamma(t))$$
for all $g \in G$ and all $\gamma : (\mathbb{R}, 0) \rightarrow (\mathbb{R}^d,0) \in \mathcal{L}$. Notice that, if $\gamma(t) = a_1 t + a_2 t^2 + \ldots$, $g \cdot \gamma(t) = \alpha_g(a_1) t + \alpha_g(a_2) t^2 + \ldots$ by the linearity of the action.

For all $n \geq 1$, thanks to its linearity, the action of $G$ on $\mathcal{L}$ induces an action on the space 
$$\mathcal{L}_n = \mathcal{L}_n(\mathbb{R}^d,0) = \left\{\gamma : (\mathbb{R}, 0) \rightarrow (\mathbb{R}^d,0)~|~\gamma(t) =  a_1 t + a_2 t^2 + \ldots + a_n t^n,~a_i \in \mathbb{R}^d \right\}$$
of arcs truncated at the order $n+1$. Furthermore, the truncation morphism $\pi_n : \mathcal{L} \rightarrow \mathcal{L}_n$ is equivariant with respect to these actions of $G$.
\\

Consider now an equivariant Nash germ $f : (\mathbb{R}^d, 0) \rightarrow (\mathbb{R},0)$, that is $f$ is invariant under right composition with the linear action of $G$. Then, for all $n \geq 1$, the set 
$$A_n(f) := \left\{ \gamma \in \mathcal{L}_n ~|~ f \circ \gamma(t) = c t^n + \cdots,~c \neq 0 \right \}$$ 
of truncated arcs of $\mathcal{L}_n$ becoming series of order $n$ after left composition with $f$ is globally stable under the action of $G$ on $\mathcal{L}_n$. 
 
Consequently, we can apply the equivariant virtual Poincar\'e series to the sets $A_n(f)$, which are Zariski constructible subsets of $\mathbb{R}^{nd}$ equipped with an algebraic action of $G$, and we define the naive equivariant zeta function
$$Z_f^G(u,T) := \sum_{n \geq 1} \beta^G\left(A_n(f)\right) u^{-nd} T^n \in \mathbb{Z}[u][[u^{-1}]][[T]]$$
of $f$
\\

Similarly, the sets
$$A_n^+(f) := \{\gamma \in \mathcal{L}_n~|~f \circ \gamma(t) = + t^n + \cdots \} \mbox{     and     } A_n^-(f) := \{\gamma \in \mathcal{L}_n~|~f \circ \gamma(t) = - t^n + \cdots \}$$
are also stable under the action of $G$ on $\mathcal{L}_n$ and we define the equivariant zeta functions with signs $Z_f^{G,+}$ and $Z_f^{G,-}$ of the invariant Nash germ $f$ :
$$Z_f^{G,\pm}(u,T) := \sum_{n \geq 1} \beta^G\left(A_n^{\pm}(f)\right) u^{-nd} T^n \in \mathbb{Z}[u][[u^{-1}]][[T]].$$

\begin{rem} 
\begin{itemize}
 \item For $G = \{e\}$, the equivariant zeta functions are the zeta functions defined in \cite{GF-MI} and \cite{GF-ZF}.
 \item These equivariant zeta functions are different from the equivariant zeta functions defined in \cite{GF}.
\end{itemize}
\end{rem}

\begin{ex}[see also \cite{KoiPar} and \cite{GF-MI}] Equip the affine line $\mathbb{R}$ with the linear involution $s : x \mapsto -x$. Let $k \in \mathbb{N}^*$ and consider the invariant Nash germ $f : (\mathbb{R},0) \rightarrow (\mathbb{R},0)$ given by $f(x) = x^{2k}$. 

For all $n \geq 1$, if $n$ is not divisible by $2k$, $A_n(f)$ is empty, and if $n = 2k m$,
$$A_n(f) =  \left\{\gamma : (\mathbb{R}, 0) \rightarrow (\mathbb{R},0)~|~\gamma(t) =  a_m t^m + \ldots + a_n t^n,~a_m \neq 0 \right\}$$
is equivariantly Nash isomorphic to $\mathbb{R}^* \times \mathbb{R}^{n-m}$ equipped with the diagonal action of $G := \mathbb{Z}/2\mathbb{Z}$ on each factor induced from the action of $s$ on $\mathbb{R}$. Therefore the equivariant virtual Poincar\'e series of $A_n(f)$ is $(u-1) \frac{u^{n-m+1}}{u-1} = u^{n-m+1}$ if $n = 2k m$ (by Lemma \ref{lemprod} and Proposition \ref{prodaf}), $0$ otherwise and we have
$$Z_f^G(u,T) = \sum_{m \geq 1} u^{2k m-m+1} \left(\frac{T}{u} \right)^{2k m} = \frac{u~T^{2k}}{u - T^{2k}}.$$ 

Now, $f$ is positive so $Z_f^{G,-} = 0$, and for $n = 2k m$,
$$A_n^+(f) =  \left\{\gamma : (\mathbb{R}, 0) \rightarrow (\mathbb{R},0)~|~\gamma(t) =  \pm t^m + \ldots + a_n t^n \right\}$$
is equivariantly Nash isomorphic to $\{\pm 1\} \times \mathbb{R}^{n-m}$, hence $\beta^G(A_n^+(f)) = u^{n-m}$ (the points $-1$ and $+1$ are exchanged by the involution $s$). Thus,
$$Z_f^G(u,T) = \sum_{m \geq 1} u^{2k m-m} \left(\frac{T}{u} \right)^{2k m} = \frac{T^{2k}}{u - T^{2k}}.$$ 
\end{ex}

\subsection{Denef-Loeser formulae for equivariant zeta functions} \label{parequivDLform}

In the following proposition \ref{equivDLform}, we show that, as the non-equivariant one in \cite{GF-MI} and \cite{GF-ZF}, the naive equivariant zeta function is rational. This Denef-Loeser formula for an equivariant modification will allow us to prove that two invariant Nash germs equivariantly blow-Nash equivalent through an equivariant blow-Nash isomorphism have the same naive equivariant zeta function (Theorem \ref{equivzetafuncinv}). 

We keep the notations from previous subsection \ref{subsecequivzetafunc}.

\begin{prop} \label{equivDLform}
Let $\sigma : (M, \sigma^{-1}(0)) \rightarrow (\mathbb{R}^d,0)$ be an equivariant Nash modification of $f$.

At first, we keep notations from the non-equivariant case (\cite{GF-ZF}) :

\begin{itemize}
	\item Let $(f \circ \sigma)^{-1}(0) = \bigcup_{j \in J} E_j$ be the decomposition of $(f \circ \sigma)^{-1}(0)$ into irreducible components. Then there exists $K \subset J$ such that $\sigma^{-1}(0) = \bigcup_{k \in K} E_k$.
	\item Put $N_i := mult_{E_i}~f \circ \sigma$ and $\nu_i := 1 + mult_{E_i}~jac~\sigma$, and, for $I \subset J$, $E_I^0 := \left(\bigcap_{i \in I} E_i\right) \setminus \left(\bigcup_{j \in J \setminus I} E_j \right)$.
\end{itemize}

Now, the action of $G$ on $M$ induces an action of $G$ on the set of irreducible components of $(f \circ \sigma)^{-1}(0)$. For $j_1, j_2 \in J$ and $g \in G$, we write the equality $j_2 = g \cdot j_1$ if $E_{j_2} = g \cdot E_{j_1}$. This induces an action of $G$ on the set $\Lambda$ of non-empty subsets of $J$ and we denote by $\underline{I}$ the orbit of a non-empty subset $I$ of $J$.  

For $\underline{I}$ in $\Lambda/G$, we then denote by $E^0_{\underline{I}}$ the union of the sets $E^0_{g \cdot I} = g \cdot E^0_I = \left(\bigcap_{i \in I} g \cdot E_i \right) \setminus \left(\bigcup_{j \in J \setminus I} g \cdot E_j \right)$, $g \in G$ (it is the orbit of $E^0_I$ in $M$) and we have the equality

$$Z^G_f(u,T) = \sum_{\underline{I} \in \Lambda/G} (u-1)^{|I|} \beta^G\left(E^0_{\underline{I}} \cap \sigma^{-1}(0)\right) \prod_{i \in I} \frac{u^{-\nu_i} T^{N_i}}{1-u^{-\nu_i} T^{N_i}}.$$
\end{prop}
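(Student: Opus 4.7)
My plan is to adapt the non-equivariant Denef--Loeser calculation of \cite{DL-GA, GF-ZF}, systematically tracking the induced $G$-action at each step. First I would lift arcs through $\sigma$: for each $n$, partition $A_n(f)$ according to which stratum $E^0_I$ contains the origin of the lifted arc and the multiplicities $k_i = mult_{E_i}(\gamma)$ for $i \in I$. Because $G$ permutes the $E_j$ while preserving $N_i$ and $\nu_i$ along orbits, these pieces naturally reassemble into $G$-stable subsets indexed by orbits $\underline{I} \in \Lambda/G$ together with a tuple $(k_i)_{i \in I}$ on a chosen representative $I$; such a piece sits inside $A_n(f)$ precisely when $\sum_{i \in I} k_i N_i = n$.

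Next I would compute the equivariant virtual Poincar\'e series of each piece using the normal crossings structure. In local coordinates around a point of $E^0_I$ with $E_i = \{x_i = 0\}$ for $i \in I$, an arc with the prescribed multiplicities is parametrized by leading coefficients $a_{i,k_i} \in \mathbb{R}^*$ for $i \in I$ and free higher coefficients in $\mathbb{R}$. Globalizing across the orbit $\underline{I}$, the piece becomes a bundle over $E^0_{\underline{I}} \cap \sigma^{-1}(0)$ whose fiber splits equivariantly as $(\mathbb{R}^*)^{|I|}$ (the leading coefficients, permuted through the $G$-action on $I$) times an affine space $\mathbb{R}^{R(n,k)}$ (the higher coefficients). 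Lemma \ref{lemprod} and Proposition \ref{prodaf} cover precisely multiplication by tori $(\mathbb{R}^*)^m$ and affine spaces $\mathbb{R}^r$ under an arbitrary algebraic $G$-action, and together they yield
$$\beta^G\bigl(X_{\underline{I},k,n}\bigr) = (u-1)^{|I|} u^{R(n,k)} \beta^G\bigl(E^0_{\underline{I}} \cap \sigma^{-1}(0)\bigr),$$
with $R(n,k) = nd - \sum_{i \in I} k_i$.

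Third, I would invoke the equivariant Kontsevich change of variables formula (Proposition \ref{Konequiv}) to transfer each piece's measure from the arc space of $M$ back to $A_n(f) \subset \mathcal{L}_n(\mathbb{R}^d, 0)$; this inserts the correction $u^{-\sum_{i \in I} k_i (\nu_i - 1)}$ reflecting the vanishing of $jac\,\sigma$. Multiplying by $u^{-nd} T^n$ and summing over $n$, the $u$-exponents collapse so that each piece contributes $(u-1)^{|I|} \beta^G(E^0_{\underline{I}} \cap \sigma^{-1}(0)) \prod_{i \in I} (u^{-\nu_i} T^{N_i})^{k_i}$; summing each $k_i \geq 1$ as a geometric series then produces $\prod_{i \in I} \frac{u^{-\nu_i} T^{N_i}}{1 - u^{-\nu_i} T^{N_i}}$, yielding the stated formula.

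The main obstacle is that the equivariant virtual Poincar\'e series is not multiplicative on arbitrary products, so the bundle-fiber description above must be set up so that the fibers are only of the two types covered by Lemma \ref{lemprod} and Proposition \ref{prodaf}. The delicate verification is that, globally over the orbit $E^0_{\underline{I}}$ where $G$ transitively permutes the sub-strata $E^0_{gI}$, the normal crossings trivialization really does give such an equivariant bundle description, with the stabilizer of $I$ acting compatibly on the distinguished leading and free coordinates. The equivariant Kontsevich formula (Proposition \ref{Konequiv}) is the other substantial input, but it is already granted by the preceding subsection.
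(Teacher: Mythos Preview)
Your overall strategy matches the paper's: lift arcs to $M$ via $\sigma$, stratify by the orbit $E^0_{\underline{I}}$ containing $\gamma(0)$ together with the contact orders $k_i$, trivialize each piece in normal-crossings coordinates, apply Lemma~\ref{lemprod} and Proposition~\ref{prodaf} to split off the torus and affine factors, and combine with the equivariant Kontsevich formula to get the geometric-series product. The ordering of the Kontsevich step relative to the stratification is slightly different in the paper (the paper introduces the jacobian order $e$ as an intermediate index and applies Proposition~\ref{Konequiv} before the stratum computation), but that is cosmetic.

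There is, however, one genuine gap. You claim that Lemma~\ref{lemprod} and Proposition~\ref{prodaf} handle $(\mathbb{R}^*)^m$ and $\mathbb{R}^r$ under an \emph{arbitrary} algebraic $G$-action, and you describe the action on the leading-coefficient torus $(\mathbb{R}^*)^{|I|}$ as the permutation action through $G$ on $I$. Lemma~\ref{lemprod} is proved by peeling off one $\mathbb{R}^*$ at a time, which requires each factor to be $G$-stable; it does \emph{not} cover factor-permuting actions, and the identity $\beta^G((\mathbb{R}^*)^m\times X)=(u-1)^m\beta^G(X)$ can genuinely fail there (for $G=\mathbb{Z}/2\mathbb{Z}$ swapping the two factors of $(\mathbb{R}^*)^2$ one finds $\beta^G((\mathbb{R}^*)^2)=u^2+1\neq u(u-1)$). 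The paper avoids this by a specific choice of trivialization you have not identified: over the translate $g\cdot E^0_I$ one uses the \emph{transported} chart $\psi_g:=\delta_g\circ\varphi$ rather than an independently chosen normal-crossings chart. With this choice, $g\cdot\gamma$ has exactly the same fibre coordinates $(\gamma_1,\ldots,\gamma_d)$ as $\gamma$, so the $G$-action on the product $(E^0_{\underline{I}}\cap\sigma^{-1}(0))\times(\mathbb{R}^*)^{|I|}\times\mathbb{R}^{nd-\sum k_i}$ is diagonal with \emph{trivial} action on both the torus and the affine factor. That is precisely the hypothesis under which Lemma~\ref{lemprod} and Proposition~\ref{prodaf} apply. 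You rightly flag the equivariant trivialization as the delicate point, but your stated expectation of how it resolves would not go through.
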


\begin{rem}
For all $i \in I$ and $g \in G$, $mult_{g \cdot E_i} f \circ \sigma = mult_{E_i} f \circ \sigma$ and $mult_{g \cdot E_i} jac~ \sigma = mult_{E_i} jac~ \sigma$, thanks to the equivariance of $f$ and $\sigma$ (see part (iv) of the proof below).
% The first equality comes from the equivariance of $f$, the second one from the definition of an equivariant Nash modification (see section \ref{equivbnequiv}).
\end{rem}

\begin{proof} The proof is a generalization to the equivariant setting of the proof of Denef-Loeser formula in \cite{GF-MI} and \cite{GF-ZF}, which uses the theory of motivic integration on arc spaces for arc-symmetric sets (see also \cite{DL-GA}). The key point is the justification of Kontsevich change of variables formula (\cite{Kon}) in our setting.
\\

The proof runs as follows. We define the notion of $G$-stable subsets of the arc space associated to $(\mathbb{R}^d,0)$ or $(M, \sigma^{-1}(0))$. These sets constitute the measurable sets with respect to a measure defined using the equivariant virtual Poincar\'e series. Here, we use the good behaviour of $\beta^G$ with respect to equivariant vector bundles (Proposition \ref{prodaf}) to justify that this measure is well-defined.

This allows one to define an integration with respect to this equivariant measure. We show the validity of the Kontsevich change of variables in the equivariant setting (Proposition \ref{Konequiv} below) just after the present proof.

This key formula provides us a first intermediate equality for $Z_f^G(u,T)$, bringing out some $\mathcal{AS}$ sets globally invariant under the induced actions of $G$, which involve the equivariant Nash modification $\sigma$ of $f$.

The final step is the computation of the value of the equivariant virtual Poincar\'e series of these $G$-$\mathcal{AS}$ sets in terms of the irreducible components of $(f \circ \sigma)^{-1}(0)$.
\vspace{0.5cm}

\underline{(i) Equivariant measurability and equivariant integration on arc spaces}
\\

We first define a notion of equivariant measurability and equivariant measure in the arc spaces $\mathcal{L}(\mathbb{R}^d, 0)$ and $\mathcal{L}\left(M, \sigma^{-1}(0)\right) = \{\gamma : (\mathbb{R},0) \rightarrow (M, \sigma^{-1}(0)) \mbox{ formal } \}$. The action of $G$ on $M$, given by algebraic isomorphisms $\delta_g$, $g \in G$, induces an action on $\mathcal{L}(M, \sigma^{-1}(0))$ by composition. For all $n \geq 0$, the space $\mathcal{L}_n(M, \sigma^{-1}(0))$ of arcs truncated at order $n+1$ is stable under the action of $G$ on $\mathcal{L}(M, \sigma^{-1}(0))$ and the $n+1$-th order truncating morphisms $\pi_n : \mathcal{L}(M, \sigma^{-1}(0)) \rightarrow \mathcal{L}_n(M, \sigma^{-1}(0))$ is equivariant (see part (iv) of the proof).
%Notice that, if $\gamma \in \mathcal{L}\left(M, \sigma^{-1}(0)\right)$ and $g \in G$, a Nash chart around $\gamma(0)$ gives, by composition with $\delta_g$, a Nash chart around $g \cdot \gamma(0)$ and therefore, if $\gamma(t) = a_1 t + a_2 t^2 \ldots$ (with $a_i \in \mathbb{R}^d$) in the chart around $\gamma(0)$, we have $g \cdot \gamma(t) = a_1 t + a_2 t^2 \ldots$ in the chart around $g \cdot \gamma(0)$ (see part (iv) of the proof for more details).
%In particular, for all $n \geq 0$, the space $\mathcal{L}_n(M, \sigma^{-1}(0))$ of arcs truncated at order $n+1$ is stable under the action of $G$ on $\mathcal{L}(M, \sigma^{-1}(0))$ and the $n+1$-th order truncating morphisms $\pi_n : \mathcal{L}(M, \sigma^{-1}(0)) \rightarrow \mathcal{L}_n(M, \sigma^{-1}(0))$ is equivariant. 

For convenience, in the following definitions, $\mathcal{L}$ will denote either $\mathcal{L}(\mathbb{R}^d, 0)$ or $\mathcal{L}(M, \sigma^{-1}(0))$.

Now we say that a subset $A$ of the arc space $\mathcal{L}$ is $G$-stable if there exists $n \geq 0$ and an $\mathcal{AS}$-subset $C$ of $\mathcal{L}_n$, globally invariant under the algebraic action of $G$ on $\mathcal{L}_n$, such that $A = \pi_n^{-1}(C)$. Notice that a $G$-stable set is globally invariant under the action of $G$ on $\mathcal{L}$. Then we define the measure $\beta^G(A)$ of a $G$-stable set $A$ by setting
$$\beta^G(A) := u^{-(n+1)d} \beta^G(\pi_n(A)) \in \mathbb{Z}[u][[u^{-1}]]$$
for $n$ big enough.
\\

Let us show that this measure is well-defined. This is actually a consequence of the fact that the truncation projections $q_n : \mathcal{L}_{n+1} \rightarrow \mathcal{L}_n$ are vector bundles with fiber $\mathbb{R}^d$, the action of $G$ sending linearly a fiber on another (for $\mathcal{L} = \mathcal{L}(M, \sigma^{-1}(0))$, we can cover the compact set $\sigma^{-1}(0)$ by the orbits of a finite number of open affine subsets $U_{x}$, $x \in \sigma^{-1}(0)$). 

Now, if $A = \pi_n^{-1}(C_n) = \pi_{n+1}^{-1}(C_{n+1})$, since $q_n : C_{n+1} \rightarrow C_n$ is a restriction of the $G$-equivariant vector bundle $q_n : \mathcal{L}_{n+1} \rightarrow \mathcal{L}_n$, we have $\beta^G(C_{n+1}) = u^d \beta^G(C_n)$ by Theorem \ref{eqvirpoiser} and Proposition \ref{prodaf}.
\\

We then define an integral with respect to the measure $\beta^G$ for maps $\theta$ with source a $G$-stable set $A$ and $\mathbb{Z}[u,u^{-1}]$ as target, with finite image and $G$-stable sets as fibers : the integral of $\theta$ over $A$ is
$$\int_A \theta d \beta^G := \sum_{c \in \mathbb{Z}[u,u^{-1}]} c \beta^G\left(\theta^{-1}(c)\right).$$

\vspace{0.5cm}

\underline{(ii) Kontsevich change of variables}
\\

Now we state the equivariant version of the change of variables formula in \cite{Kon} (see also \cite{DL-GA} and \cite{GF-MI}) :

\begin{prop} \label{Konequiv} Let $A$ be $G$-stable set of $\mathcal{L}(\mathbb{R}^d,0)$ and assume that $ord_t~jac~\sigma$ is bounded on $\sigma^{-1}(A)$. Then
$$\beta^G(A) = \int_{\sigma^{-1}(A)} u^{-ord_t~jac~\sigma} d\beta^G.$$
\end{prop}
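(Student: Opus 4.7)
I would adapt to the equivariant setting the proof of the non-equivariant Kontsevich change of variables formula from \cite{Kon}, \cite{DL-GA} and \cite{GF-MI}, checking the compatibility with the action of $G$ at every step. The only new ingredient needed is Proposition \ref{prodaf}, which says that $\beta^G$ is multiplicative under trivial affine bundles whose fibres carry an arbitrary algebraic $G$-action; this is exactly what handles the fibres of $\sigma$ at a truncated level, on which $G$ may act non-trivially through its action on $M$.

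First I would stratify $\sigma^{-1}(A)$ by the order of the Jacobian:
$$\sigma^{-1}(A) = \bigsqcup_{e \geq 0} \Delta_e, \quad \Delta_e := \{\gamma \in \sigma^{-1}(A) : ord_t~jac~\sigma~(\gamma) = e\}.$$
Because the equivariant-modification hypothesis forces the $G$-action on $M$ to preserve the multiplicities of $jac~\sigma$ along its exceptional divisors, the function $ord_t~jac~\sigma$ is $G$-invariant on $\mathcal{L}(M,\sigma^{-1}(0))$, so each $\Delta_e$ is $G$-stable; $\sigma^{-1}(A)$ is itself $G$-stable since $\sigma$ is equivariant and $A$ is $G$-stable. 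The boundedness hypothesis makes the decomposition finite, so it is enough to prove, for each $e$, the equality $\beta^G(\Delta_e) = u^{e}\,\beta^G(\sigma(\Delta_e))$.

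Fix such an $e$ and $n$ large relative to $e$, and put $\Delta_{e,n} := \pi_n(\Delta_e)$ and $B_{e,n} := \pi_n(\sigma(\Delta_e))$, both $G$-stable $\mathcal{AS}$-sets, connected by the $G$-equivariant truncation $\sigma_n : \Delta_{e,n} \to B_{e,n}$. In the non-equivariant case one shows, after a suitable $\mathcal{AS}$-stratification of $B_{e,n}$, that $\sigma_n$ is a trivial affine bundle of rank $e$ over each stratum: two liftings of a given truncated arc differ only in their last $e$ Taylor coefficients, the others being killed to the appropriate order by the Jacobian. In our setting the stratification can be taken $G$-stable and the fibre $\mathbb{R}^e$ carries the linear $G$-action coming from the derivative of $\sigma$ at the lifted point; applying Proposition \ref{prodaf} on each stratum and summing then yields $\beta^G(\Delta_{e,n}) = u^e\,\beta^G(B_{e,n})$, which after dividing by $u^{(n+1)d}$ becomes exactly $\beta^G(\Delta_e) = u^e\,\beta^G(\sigma(\Delta_e))$. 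Weighting by $u^{-e}$ and summing over $e$ produces the stated integral identity.

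The main obstacle is producing the $G$-stable stratification of $B_{e,n}$ over which $\sigma_n$ is trivialised $G$-equivariantly: the classical proof uses local Nash charts on $M$ which are not a priori $G$-compatible. I would handle this by refining the stratification so that each stratum lies over a single orbit type of the $G$-action on $M$, and using that $G$ is finite (hence one has equivariant local linearisations by the usual averaging argument) to produce the required equivariant trivialisation. Once this is in place, Proposition \ref{prodaf} supplies the multiplicative behaviour of $\beta^G$ and the formula follows exactly as in the non-equivariant case.
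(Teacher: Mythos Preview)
Your overall architecture matches the paper's: stratify by the order $e$ of the Jacobian, pass to a truncated level $n\gg e$, use that $\sigma_n:\Delta_{e,n}\to\sigma_n(\Delta_{e,n})$ is a piecewise trivial $\mathbb{R}^e$-bundle, invoke Proposition~\ref{prodaf}, and sum. The paper packages the bundle statement as a separate Lemma~\ref{lemequivKon} (the equivariant analogue of Denef--Loeser's Lemma~3.4), builds the trivialisation via an equivariant section $s:\mathcal{L}_n(\mathbb{R}^d,0)\to\mathcal{L}(\mathbb{R}^d,0)$ and the map $\theta=\sigma^{-1}\circ s$, and then makes a non-equivariant $\mathcal{AS}$ partition into a $G$-invariant one by taking orbits of the pieces.

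Where you diverge is in the treatment of the $G$-action on the fibre $\mathbb{R}^e$. You anticipate a possibly non-trivial linear action ``coming from the derivative of $\sigma$'' and propose to control it by refining to orbit-type strata and averaging to produce equivariant charts. The paper takes a shorter path: having fixed a chart $\varphi$ near $\theta(\gamma)(0)$, it uses the transported chart $\delta_g\circ\varphi$ near $g\cdot\theta(\gamma)(0)$; in these compatible coordinates the arc $g\cdot\gamma$ has literally the same coefficient vector as $\gamma$, so the fibre over $g\cdot\gamma$ is obtained from that over $\gamma$ by the identity. Hence the $G$-action on $\mathbb{R}^e$ is \emph{trivial}, the product $S_i\times\mathbb{R}^e$ is genuinely diagonal, and Proposition~\ref{prodaf} applies immediately. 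Your route would also reach the goal (Proposition~\ref{prodaf} allows any action on the affine factor), but you would still have to check that the fibre action is constant along each stratum so that the product really is diagonal; the transported-chart trick makes that issue disappear. Incidentally, the action on the fibre does not come from $d\sigma$ but from the $G$-action on $M$ via $\delta_g$; this is why the transported-chart argument is the natural one here.
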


Here, we denote also by $\sigma$ the equivariant map $\mathcal{L}\left(M, \sigma^{-1}(0)\right) \rightarrow \mathcal{L}(\mathbb{R}^d, 0)~;~\gamma \mapsto \sigma \circ \gamma$. We show Proposition \ref{Konequiv} after the present proof.

\vspace{0.5cm}

\underline{(iii) Applying Kontsevich formula}
\\

We use the equivariant version of Kontsevich formula and the additivity of the equivariant virtual Poincar\'e series to reduce the computation of the naive equivariant zeta function to the computation of the equivariant virtual Poincar\'e series of $G$-$\mathcal{AS}$ sets expressed in terms of the equivariant Nash modification $\sigma$ of $f$.

First, we give notations to the sets that will appear as the proof goes along, similarly to \cite{GF-MI}. For any $n \geq 1$ and $e \geq 1$, we put 
\begin{itemize}
	\item $\mathcal{Z}_n(f) := \pi_n^{-1}(A_n(f))$, 
	\item $\mathcal{Z}_n(f \circ \sigma) := \sigma^{-1}(\mathcal{Z}_n(f))$,
	\item $\Delta_e := \{\gamma \in \mathcal{L}\left(M, \sigma^{-1}(0)\right)~|~ord_t~jac~\sigma(\gamma(t)) = e\}$, 
	\item $\mathcal{Z}_{n,e}(f \circ \sigma) := \mathcal{Z}_n(f \circ \sigma) \cap \Delta_e$.
\end{itemize}
Notice that all the sets $\mathcal{Z}_n(f)$, $\mathcal{Z}_n(f \circ \sigma)$, $\Delta_e$ and $\mathcal{Z}_{n,e}(f \circ \sigma)$ are globally invariant under the actions of $G$ on arc spaces, notably because $\sigma$ is an equivariant Nash modification (see also step (iv) below).
\\

First, since all the sets $\mathcal{Z}_n(f)$ are by definition $G$-stable, we can consider their equivariant measure $\beta^G(\mathcal{Z}_n(f)) = u^{-(n+1)d} \beta^G(A_n(f))$ and write
$$Z^G_f(u,T) = u^d \sum_{n \geq 1} \beta^G(\mathcal{Z}_n(f)) T^n.$$
We then apply the equivariant Kontsevich change of variables formula to compute $\beta^G(\mathcal{Z}_n(f))$ for all $n \geq 1$. Indeed, there exists $c \in \mathbb{N}$ such that for all $n \geq 1$, $\mathcal{Z}_n(f\circ \sigma)$ is the finite disjoint union $\cup_{e \leq cn} \mathcal{Z}_{n,e}(f \circ \sigma)$ (see \cite{GF-MI}) : in particular, for all $n \geq 1$, $ord_t~jac~\sigma$ is bounded on $\mathcal{Z}_n(f\circ \sigma) = \sigma^{-1}(\mathcal{Z}_n(f))$ and we can apply Proposition \ref{Konequiv} to obtain
$$\beta^G(\mathcal{Z}_n(f)) = \int_{\sigma^{-1}(\mathcal{Z}_n(f))} u^{-ord_t~jac~\sigma} d\beta^G = \sum_{e \leq cn} u^{-e} \beta^G\left(\mathcal{Z}_{n,e}(f \circ \sigma)\right).$$

Moreover, if $n \geq 1$ and $e \leq cn$, for any arc $\gamma$ in $\mathcal{Z}_{n,e}(f \circ \sigma)$ (more generally in $\mathcal{L}(M, \sigma^{-1}(0))$), there exists $I \subset J$ such that $\pi_0(\gamma) \in E^0_I \cap \sigma^{-1}(0)$, and more particularly, there exists $\underline{I} \in \Lambda/G$ such that $\pi_0(\gamma) \in E^0_{\underline{I}} \cap \sigma^{-1}(0)$. 

Consequently, we can write the $G$-stable set $\mathcal{Z}_{n,e}(f \circ \sigma)$ as the disjoint union of the sets $\mathcal{Z}_{n,e}^{\underline{I}}(f \circ \sigma) := \mathcal{Z}_{n,e}(f \circ \sigma) \cap \pi_0^{-1}(E^0_{\underline{I}} \cap \sigma^{-1}(0))$, $\underline{I} \in \Lambda/G$, and we have
\begin{eqnarray*}
	\ \beta^G(\mathcal{Z}_{n,e}(f \circ \sigma)) & =  & u^{-(n+1)d} \beta^G\left(\pi_n\left(\sqcup_{\underline{I} \in \Lambda/G}\mathcal{Z}_{n,e}^{\underline{I}}(f \circ \sigma) \right)\right) \\
	& = & u^{-(n+1)d} \sum_{\underline{I} \in \Lambda/G} \beta^G\left(\pi_n\left(\mathcal{Z}_{n,e}^{\underline{I}}(f \circ \sigma) \right)\right)
\end{eqnarray*}
(in the last equality, we used the additivity of the equivariant virtual Poincar\'e series). 
\\

Finally, we have
\begin{equation} \label{DLcalcint}
Z^G_f(u,T) = \sum_{n \geq 1} u^{- nd} T^n \sum_{e \leq c n} u^{-e} \sum_{\underline{I} \in \Lambda/G} \beta^G\left(\pi_n\left(\mathcal{Z}_{n,e}^{\underline{I}}(f \circ \sigma) \right)\right),
\end{equation}
where $\pi_n\left(\mathcal{Z}_{n,e}^{\underline{I}}(f \circ \sigma) \right)$ is the $G$-$\mathcal{AS}$ set
$$\left\{\gamma \in \mathcal{L}_n(M, \sigma^{-1}(0))~|~\gamma(0) \in E^0_{\underline{I}} \cap \sigma^{-1}(0),~ord_t~f \circ \sigma (\gamma(t)) = n,~ord_t~jac~\sigma(\gamma(t)) = e\right\}.$$

\vspace{0.5cm}

\underline{(iv) Computation of $\beta^G\left(\pi_n\left(\mathcal{Z}_{n,e}^{\underline{I}}(f \circ \sigma) \right)\right)$}
\\

Let $n \geq 1$, $e \leq nc$ and $\underline{I} \in \Lambda/G$ (such that $E_I \neq \emptyset$). First, we cover the compact set $E_{I}$ by a finite union of open affine subsets $U_{x_r}$, $x_r \in E_I$, $r = 1, \ldots, m$. We can then write
%extract a finite covering $E_I = \bigcup_{l = 1}^m E_I \cap U_{x_l}$ and this gives us in particular a finite covering of $E^0_{\underline{I}}$ by open $\mathcal{AS}$ sets :
$$E^0_{\underline{I}} = \bigcup_{r = 1}^m \left(  \bigcup_{g \in G} E_{g \cdot I}^0 \cap U_{g \cdot x_r} \right).$$
%= \bigcup_{g \in G} g \cdot \left( \bigcup_{l = 1}^m  \left( E_{I}^0 \cap U_{x_l}\right) \right) = \bigcup_{r = 1}^m \left(  \bigcup_{g \in G} E_{g \cdot I}^0 \cap U_{g \cdot x_r} \right).$$

We are going to compute the equivariant virtual Poincar\'e series of
$$\left\{\gamma \in \mathcal{L}_n(M, \sigma^{-1}(0))~|~\gamma(0) \in E^0_{\underline{I}} \cap \sigma^{-1}(0),~ord_t~f \circ \sigma (\gamma(t)) = n~,~ord_t~jac~\sigma(\gamma(t)) = e\right\}.$$

Consider an arc $\gamma$ in this last set and assume, without any loss of generality, that $\gamma(0) \in E^0_I \cap \sigma^{-1}(0) \cap U_x$ with $x \in \{x_1, \ldots, x_m\}$. For all $i \in I$, we have
\begin{itemize}
	\item $E_i \cap U_x = \varphi_x(\{y_{j_i} = 0\} \cap V_x)$,
	\item $f \circ \sigma(\varphi_x(y_1, \ldots, y_d)) = unit(y_1, \ldots, y_d)~\prod_{i \in I} y_{j_i}^{N_i}$,
	\item $jac~\sigma(\varphi_x(y_1, \ldots, y_d)) = unit(y_1, \ldots, y_d)~\prod_{i \in I} y_{j_i}^{\nu_{i} - 1}$.
\end{itemize}
We can assume $U_x$ and $V_x$ are Nash isomorphic $\mathcal{AS}$-sets such that the above unit Nash functions are nowhere zero on $V_x$ (by intersecting $V_x$ with the orbit under $G$ of the respective complements of their zero sets which are $\mathcal{AS}$-sets).

Let $g \in G$. For $i \in I$, the action of $g$ on $M$ sends an irreducible component $E_i$ locally described in $V_x$ by the equation $y_{j_i} = 0$ on the irreducible component $E_{g \cdot i}$ locally described in $V_{g \cdot x}$ by the same equation $y_{j_{i}} = 0$. Therefore, after same relevant permutations of coordinates in the source and target spaces, the matrix of $\nu_{x,g} : \mathbb{R}^d \supset V_x \rightarrow \mathbb{R}^d \supset V_{g \cdot x}$ becomes of the form
%matrix of $\nu_{x,g} : \mathbb{R}^d \supset V_x \rightarrow \mathbb{R}^d \supset V_{g \cdot x}$ 

$$\left(\vcenter{\xymatrix{
				\lambda_1 \ar@{-}[dddrrr] & 0 \ar@{.}[rr] \ar@{.}[ddrr] &  & 0 \ar@{.}[dd] & 0 \ar@{.}[r] \ar@{.}[ddd] & 0 \ar@{.}[ddd] \\
				0 \ar@{.}[ddrr] \ar@{.}[dd] & & &  & &\\
				&&&0 & &\\
				0 \ar@{.}[rr]& & 0 & \lambda_{|I|} &0 \ar@{.}[r] &0 \\
				0 \ar@{.}[rrr]  \ar@{.}[d] & &   &  0   \ar@{.}[d]            &         \ar@{-}[r] \ar@{-}[d]     & \ar@{-}[d]\\
				0 \ar@{.}[rrr]  &  &   &           0    &     \ar@{-}[r]        & 
				}}
	\right)$$
where all the $\lambda_j$'s are non-zero (recall the definition of an equivariant Nash modification of $f$ in section \ref{equivbnequiv}).

In particular, for all $i \in I$, 
\begin{itemize}
	\item $N_{g \cdot i} = N_{i}$,
	\item $\nu_{g \cdot i} = \nu_{i}$,
	\item if $\gamma(t) = \varphi_x((\gamma_1(t), \cdots, \gamma_d(t)))$ and if we denote $k_{i}(\gamma) := ord_t~\gamma_{j_i}(t)$ for all $i \in I$, $k_{i}(g \cdot \gamma(t)) = k_i(\gamma)$,
\end{itemize}
so that there is an equivariant Nash isomorphism between 
$$\left\{\gamma \in \mathcal{L}_n(M, \sigma^{-1}(0))~|~\gamma(0) \in E^0_{\underline{I}} \cap \sigma^{-1}(0),~ord_t~f \circ \sigma (\gamma(t)) = n,~ord_t~jac~\sigma(\gamma(t)) = e\right\}$$
and
$$\bigsqcup_{k \in A(n,e)} \left\{\gamma \in \mathcal{L}_n(M, \sigma^{-1}(0))~|~\gamma(0) \in E^0_{\underline{I}} \cap \sigma^{-1}(0),~ord_t~\gamma_{j_{i}}(t) = k_i,~i \in I \right\},$$
with $A(n,e) := \{k \in \mathbb{N}^d ~|~\sum_{i \in I} k_i N_{i} = n,~\sum_{i \in I} k_i (\nu_{i} - 1) = e\}$.
Consequently, the equivariant virtual Poincar\'e series of these sets are equal.
\\

%$$\left\{\gamma \in \mathcal{L}_n(M, \sigma^{-1}(0))~|~\gamma(0) \in E^0_{\underline{I}} \cap \sigma^{-1}(0),~\sum_{i \in I} k_i N_{i} = n,~\sum_{i \in I} k_i (\nu_{i} - 1) = e\right\},$$
%so that their equivariant virtual Poincar\'e series are equal.

Let $k \in A(n,e)$. We compute now the equivariant virtual Poincar\'e series of the set
$$W_k := \left\{\gamma \in \mathcal{L}_n(M, \sigma^{-1}(0))~|~\gamma(0) \in E^0_{\underline{I}} \cap \sigma^{-1}(0),~ord_t~\gamma_{j_{i}}(t) = k_i,~i \in I \right\}.$$
We write it as the difference of
$$\left\{\gamma \in \mathcal{L}_n(M, \sigma^{-1}(0))~|~\gamma(0) \in E^0_{\underline{I}} \cap \sigma^{-1}(0),~\gamma_{j_{i}}(t) = c_i t^{k_i} + \cdots, c_i \in \mathbb{R},~ i \in I \right\}$$
and the union over $l \in I$ of the sets
$$\left\{\gamma \in \mathcal{L}_n(M, \sigma^{-1}(0))~|~\gamma(0) \in E^0_{\underline{I}} \cap \sigma^{-1}(0),~\gamma_{j_{l}}(t) = 0 \times t^{k_l} + \cdots \right\}.$$
%$$\left\{\gamma \in \mathcal{L}_n(M, \sigma^{-1}(0))~|~\gamma(0) \in E^0_{\underline{I}} \cap \sigma^{-1}(0),~\gamma_{j_i}(t) = c_i t^{k_i} + \cdots, c_i \in \mathbb{R},~\sum_{i \in I} k_i N_{i} = n,~\sum_{i \in I} k_i (\nu_{i} - 1) = e\right\}$$
%and the union over $l \in I$ of the sets
%$$\left\{\gamma \in \mathcal{L}_n(M, \sigma^{-1}(0))~|~\gamma(0) \in E^0_{\underline{I}} \cap \sigma^{-1}(0),~\gamma_{j_l}(t) = 0 \times t^{k_l} + \cdots,~\sum_{i \in I} k_i N_{i} = n,~\sum_{i \in I} k_i (\nu_{i} - 1) = e\right\}.$$
Thanks to the additivity of the equivariant virtual Poincar\'e series, we are then reduced to compute the equivariant virtual Poincar\'e series of
$$\left\{\gamma \in \mathcal{L}_n(M, \sigma^{-1}(0))~|~\gamma(0) \in E^0_{\underline{I}} \cap \sigma^{-1}(0),~\gamma_{j_{l}}(t) = 0 \times t^{k_l} + \cdots,~l \in \{l_1, \ldots, l_s\} \right\}$$
for any $\{l_1, \ldots, l_s\} \subset I$. Considering the restriction to such a set of the projection $\pi_0$ onto $E^0_{\underline{I}} \cap \sigma^{-1}(0)$, we see that this is a $G$-equivariant vector bundle over $E^0_{\underline{I}} \cap \sigma^{-1}(0)$ with fibers isomorphic to $\mathbb{R}^{|I|-s} \left(\prod_{i \in I} \mathbb{R}^{n-k_i}\right)\left(\mathbb{R}^n\right)^{d-|I|}$.

Therefore, 
\begin{eqnarray*}
\beta^G(W_k) & = & \beta^G(E^0_{\underline{I}} \cap \sigma^{-1}(0)) u^{|I| + nd - \sum_{i \in I} k_i} - \sum_{s \in \{1, \ldots, |I|\}} (-1)^{s+1} \binom{|I|}{s} \beta^G\left(E^0_{\underline{I}} \cap \sigma^{-1}(0) \right) u^{|I|-s + nd - \sum_{i \in I} k_i} \\
& = & (u-1)^{|I|} \beta^G(E^0_{\underline{I}} \cap \sigma^{-1}(0)) u^{nd - \sum_{i \in I} k_i}
\end{eqnarray*}
\\

As a consequence,
% the equivariant virtual Poincar\'e series of
%$$\left\{\gamma \in \mathcal{L}_n(M, \sigma^{-1}(0))~|~\gamma(0) \in E^0_{\underline{I}} \cap \sigma^{-1}(0),~ord_t~f \circ \sigma (\gamma(t)) = n,~ord_t~jac~\sigma(\gamma(t)) = e\right\}$$
%is
%$$\sum_{k \in A(n,e)} (u - 1)^{|I|} \beta^G\left(E^0_{\underline{I}} \cap \sigma^{-1}(0) \right) u^{n d -  \sum_{i \in I} k_i},$$
%that is
\begin{equation} \label{DLcalcpiece}
\beta^G(\pi_n\left(\mathcal{Z}_{n,e}^{\underline{I}}(f \circ \sigma) \right)) = \sum_{k \in A(n,e)} (u - 1)^{|I|} \beta^G\left(E^0_{\underline{I}} \cap \sigma^{-1}(0)\right) u^{nd -  \sum_{i \in I}k_i}.
\end{equation}

\vspace{0.5cm}

\underline{(v) Conclusion of the proof}
\\

Substituting (\ref{DLcalcpiece}) in the equality (\ref{DLcalcint}) of step (iii), we get 
$$Z^G_f(u,T) = \sum_{\underline{I} \in \Lambda/G} (u - 1)^{|I|} \beta^G\left(E^0_{\underline{I}} \cap \sigma^{-1}(0)\right) \sum_{n \geq 1} \sum_{e \leq cn} \sum_{k \in A(n,e)} u^{- e -  \sum_{i \in I} k_i} T^n.$$ 

As in \cite{GF-MI}, we write the sum $\sum_{n \geq 1} \sum_{e \leq cn} \sum_{k \in A(n,e)} u^{- e -  \sum_{i \in  I} k_i} T^n$ as the product $\prod_{i \in I} \frac{u^{-\nu_i} T^{N_i}}{1-u^{-\nu_i} T^{N_i}}$ and we obtain the Denef-Loeser formula

$$Z^G_f(u,T) =  \sum_{\underline{I} \in \Lambda/G} (u - 1)^{|I|} \beta^G\left(E^0_{\underline{I}} \cap \sigma^{-1}(0)\right) \prod_{i \in I} \frac{u^{-\nu_i} T^{N_i}}{1-u^{-\nu_i} T^{N_i}}.$$ 

\end{proof}

%\begin{rem} \label{remcond}The proof still works if we assume that $\sigma$ verifies only conditions 1, 2, 3 and conditions 6 (a), (c) and (g) of the definition of an equivariant Nash modification in section \ref{equivbnequiv}. 
%\end{rem}

We next prove the equivariant version of Kontsevich change of variables (Proposition \ref{Konequiv}), which is a key tool in the demonstration of the above Denef-Loeser formula. In order to achieve this goal, we need the following lemma which is an equivariant Nash analog to Lemma 3.4 of \cite{DL-GA} (see also Lemma 2.11 of \cite{GF-ZF}) :

\begin{lem} \label{lemequivKon}
Let $h : (M, h^{-1}(0)) \rightarrow (\mathbb{R}^d,0)$ be an equivariant proper surjective Nash map such that the action of $G$ on $M$ is locally linear around $h^{-1}(0)$, i.e corresponds locally to a linear $G$-action, in $G$-globally invariant affine open charts of $M$.

%	\item the action of $G$ on $M$ preserves globally each irreducible component of $h^{-1}(0)$,

%and assume that the action of $G$ on $M$ preserves the multiplicities of the jacobian determinant of $h$ along its exceptional divisors.

%such that each irreducible component of $h^{-1}(0)$ is globally preserved by the action of $G$ on $M$ that we assume to be locally linear around $h^{-1}(0)$.
% assume that the action of $G$ on $M$ preserves the multiplicities of the jacobian determinant of $h$ along its exceptional divisors.

For all $e \geq 1$, set
$$\Delta_e := \left\{\gamma \in \mathcal{L}(M, h^{-1}(0))~|~ord_t~jac~h(\gamma(t)) = e \right\}$$
and for all $n \geq 1$,
$$\Delta_{e,n} := \pi_n(\Delta_e),$$
and denote by $h_n$ the equivariant map $\pi_n \circ h :  \mathcal{L}_n(M,h^{-1}(0)) \rightarrow \mathcal{L}_n(\mathbb{R}^d,0)$.

If $n \geq 2e$, then $h_n(\Delta_{e,n})$ is an $\mathcal{AS}$ set, globally invariant under the action of $G$ on $\mathcal{L}_n(\mathbb{R}^d,0)$, and $h_n$ is an equivariantly piecewise trivial fibration over $h_n(\Delta_{e,n})$, with $G$-globally invariant $\mathcal{AS}$ sets as pieces, with fiber $\mathbb{R}^e$ (more precisely $h_n^{-1}(h_n(\Delta_{e,n})) \rightarrow h_n(\Delta_{e,n})$ is a $G$-equivariant vector bundle with fiber $\mathbb{R}^e$).
\end{lem}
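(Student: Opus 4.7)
The plan is to bootstrap off the non-equivariant analogue (Lemma 3.4 of \cite{DL-GA}, or its Nash version Lemma 2.11 of \cite{GF-ZF}) and $G$-symmetrize all of the data. The non-equivariant statement already delivers the $\mathcal{AS}$-structure on $h_n(\Delta_{e,n})$ and the piecewise trivialization of $h_n$ with fiber $\mathbb{R}^e$; so the work is only in upgrading the pieces to be $G$-invariant and checking that the $G$-action really stabilizes the jacobian-order stratification.

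The first step is to verify that $\Delta_e$, $\Delta_{e,n} = \pi_n(\Delta_e)$, and $h_n(\Delta_{e,n})$ are globally invariant under their induced $G$-actions. The key point is the invariance of $\Delta_e$: since $h$ is equivariant with trivial action on the target, $h \circ \delta_g = h$ for every $g \in G$, and differentiating gives $((jac\, h) \circ \delta_g)\cdot jac\, \delta_g = jac\, h$. Because $\delta_g$ is a Nash automorphism of $M$, $jac\, \delta_g$ is a nonvanishing Nash function, so the orders of vanishing of $jac\, h$ and of $(jac\, h) \circ \delta_g$ along any arc coincide. This yields $ord_t\, jac\, h((g \cdot \gamma)(t)) = ord_t\, jac\, h(\gamma(t))$, proving $G$-invariance of $\Delta_e$. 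Invariance of $\Delta_{e,n}$ and of $h_n(\Delta_{e,n})$ then follows immediately from the equivariance of $\pi_n$ and of $h_n = \pi_n \circ h$.

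The second step is to apply the non-equivariant lemma, which yields a finite partition $\{A_1, \ldots, A_k\}$ of $h_n(\Delta_{e,n})$ into $\mathcal{AS}$ pieces together with trivializations $h_n^{-1}(A_j) \cong A_j \times \mathbb{R}^e$ compatible with $h_n$. To make the partition $G$-invariant, I would pass to the common refinement of the family $\{g \cdot A_j : g \in G,\; 1 \leq j \leq k\}$, which is finite (since $G$ is finite) and remains in the $\mathcal{AS}$ category, because both boolean operations and the algebraic $G$-action preserve it. The induced $G$-action permutes the refined pieces, and collecting each $G$-orbit into a single disjoint union produces the desired partition of $h_n(\Delta_{e,n})$ into $G$-invariant $\mathcal{AS}$ pieces. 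Each refined piece $B$ sits inside some $A_j$ and hence inherits the trivialization $h_n^{-1}(B) \cong B \times \mathbb{R}^e$; taking the disjoint union over a $G$-orbit of such pieces assembles these into a trivial fibration with fiber $\mathbb{R}^e$ over the resulting $G$-invariant piece.

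The main subtle point is the invariance computation in the first step, since this is precisely where the hypothesis that the $G$-action preserves the multiplicities of $jac\, h$ along the exceptional divisors — equivalently, that $\delta_g$ is a Nash automorphism, so $jac\, \delta_g$ is a unit — guarantees that the jacobian-order stratification $\{\Delta_e\}$ is $G$-stable. Once this is in hand, the $G$-symmetrization of the piecewise trivial fibration is essentially bookkeeping, using only finiteness of $G$, closure of the $\mathcal{AS}$ category under the operations involved, and the trivial observation that a disjoint union of trivial fibrations with identical fiber is again trivial.
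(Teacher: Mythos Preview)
Your first step has a small slip: the target of $h$ is $\mathbb{R}^d$ with its linear $G$-action $\alpha_g$, not the trivial action, so $h\circ\delta_g = \alpha_g\circ h$ rather than $h\circ\delta_g = h$. This is harmless for your conclusion, since $\det\alpha_g$ is a nonzero constant and your chain-rule computation still gives $(jac\,h)\circ\delta_g = (\text{unit})\cdot jac\,h$, hence equal $t$-orders.

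The real gap is in the second step. The lemma (see the remark following its statement and the last paragraph of the paper's proof) requires more than $G$-invariant pieces $S_i$ carrying trivializations $h_n^{-1}(S_i)\cong S_i\times\mathbb{R}^e$: it requires these Nash isomorphisms to be \emph{equivariant}, with $G$ acting diagonally on $S_i\times\mathbb{R}^e$ and trivially on the $\mathbb{R}^e$ factor. This is precisely what is used in the proof of Proposition~\ref{Konequiv} to get $\beta^G(\sigma_n^{-1}(\pi_n(A)\cap S_i)) = u^e\,\beta^G(\pi_n(A)\cap S_i)$ via Proposition~\ref{prodaf}. Your symmetrization produces $G$-invariant pieces, but the trivializations over the refined pieces $B$ come from the non-equivariant lemma and have no reason to be compatible with one another under $G$; in particular, if a refined piece $B$ has nontrivial stabilizer $H\leq G$, you have not arranged the trivialization over $B$ to be $H$-equivariant, so transporting it around the orbit does not give a well-defined equivariant trivialization over the union. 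Calling this ``essentially bookkeeping'' hides exactly the content that is missing.

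The paper supplies this content by working one level up. It builds the section map $\theta: h_n(\Delta_{e,n})\to\Delta_e$, $\gamma\mapsto h^{-1}(s(\gamma))$, which is \emph{globally} equivariant, and makes it piecewise Nash on $G$-invariant pieces by an orbit construction (consistency on overlaps being forced by the equivariance of $\theta$). The fiber identification with $\mathbb{R}^e$ is then expressed in local coordinates around $\theta(\gamma)(0)$; since these coordinates transport under $\delta_g$ to coordinates around $\theta(g\cdot\gamma)(0) = g\cdot\theta(\gamma)(0)$ and the arc $g\cdot\gamma$ has the \emph{same} coordinate expression as $\gamma$ there (cf.\ step (iv) of the proof of Proposition~\ref{equivDLform}), the fiber of $g\cdot\gamma$ is obtained from that of $\gamma$ by the identity on $\mathbb{R}^e$. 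That geometric fact is what makes the trivialization equivariant with trivial action on the fiber, and it is absent from your argument.
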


\begin{rem} The details of this fibration are given at the end of the proof below.
\end{rem}

\begin{proof}[Proof of Lemma \ref{lemequivKon}]
Fix $e \geq 1$ and $n \geq 2e$.

The fact that $h_n(\Delta_{e,n})$ is an $\mathcal{AS}$ set is given by the non-equivariant result of G. Fichou in \cite{GF-ZF}, Lemma 2.11. Since $\Delta_{e,n}$ is globally invariant under the action of $G$ on $\mathcal{L}_n(M, h^{-1}(0))$, so is the set $h_n(\Delta_{e,n})$ under the action of $G$ on $\mathcal{L}_n(\mathbb{R}^d,0)$ because $h_n$ is equivariant.
\\

In the proof of the second assertion of the lemma, we refer to the proof of Lemma 3.4 in \cite{DL-GA}, replacing the terms ``regular maps'' by ``Nash maps'' and ``constructible sets'' by ``$\mathcal{AS}$ sets'' (see also \cite{GF-ZF}). 

Consider an equivariant section $s : \mathcal{L}_n(\mathbb{R}^d,0) \rightarrow \mathcal{L}(\mathbb{R}^d,0)$ of $\pi_n$. We have $s(h_n(\Delta_{e,n})) \subset h(\Delta_e)$ and $h^{-1}$ is well-defined on $h(\Delta_e)$ (because $h(\gamma_1) \neq h(\gamma_2)$ if $\gamma_1 \neq \gamma_2$ with $\gamma_1 \in \Delta_e$), Nash and equivariant, so one can construct the equivariant mapping 
$$\theta : h_n(\Delta_{e,n}) \rightarrow \Delta_e~;~ \gamma \mapsto h^{-1}(s(\gamma)).$$ 
It is an equivariantly piecewise morphism : this means there exists a finite partition of the domain of $\theta$ into $\mathcal{AS}$ sets globally invariant under the action of $G$ on $\mathcal{L}_n(\mathbb{R}^d,0)$, such that the restriction of $\theta$ to each piece is an equivariant Nash map, that is induced by an equivariant semialgebraic and analytic map. 
%This can be shown by considerations similar to the ones in the proof of Proposition \ref{equivDLform}. 

%It is a piecewise morphism : this means there exists a finite partition of the domain of $\theta$ into $\mathcal{AS}$ sets, such that the restriction of $\theta$ to each $\mathcal{AS}$ piece is a Nash map. 

%Now, let $R_1,\ldots, R_m$ be these $\mathcal{AS}$ pieces and, for each $i \in \{1, \ldots, l\}$, consider the orbit of $R_i$ under the action of $G$ on $\mathcal{L}_n(\mathbb{R}^d,0)$ from which we retrieve the orbit of $R_1 \cup \cdots \cup R_{i-1}$ : we denote the obtained $G$-$\mathcal{AS}$ set by $\widetilde{R_i}$. 

%Let $i \in \{1, \ldots, l\}$ and denote by $\psi_i$ the restriction of $\theta$ to the piece $R_i$ : it is a Nash map by assumption. We extend $\psi_i$ to an equivariant Nash map $\widetilde{\psi_i}$ on the $G$-orbit of $R_i$ by setting $\widetilde{\psi_i}(g \cdot \gamma) := \delta_g \circ \psi_i(\gamma)$ if $\gamma \in R_i$ and $g \in G$ : if $g \cdot \gamma \in R_i$, we have $\delta_g \circ \psi_i(\gamma) = \psi_i(g \cdot \gamma)$ because $\theta$ is equivariant. We then denote again by $\widetilde{\psi_i}$ the restriction of $\widetilde{\psi_i}$ to $\widetilde{R_i}$.

%This shows that we can assume $\theta$ to be an equivariantly piecewise morphism, that is there exists a finite partition of the domain of $\theta$ into $\mathcal{AS}$ sets globally invariant under the action of $G$ on $\mathcal{L}_n(\mathbb{R}^d,0)$, such that the restriction of $\theta$ to each piece is an equivariant Nash map.
 
One can then use the map $\theta$ to express the fiber of an arc $\gamma$ of $h_n(\Delta_{e,n})$ under $h_n$ : 
$$h_n^{-1}(\gamma) = \{\theta(\gamma) + t^{n+1-e} \gamma' \mbox{ mod } t^{n+1}~|~ \gamma' \mbox{ formal and } (Jac~h(\theta(\gamma)))\gamma' \equiv 0 \mbox{ mod } t^e \},$$
which can be identified to a linear subspace of $\mathbb{R}^{de}$ of dimension $e$. Furthermore, the action of $g \in G$ sending the fiber $h_n^{-1}(\gamma)$ on the fiber $h_n^{-1}(g \cdot \gamma)$ is given by the matrix $A_{x,g}$ for some $x \in h^{-1}(0)$. %Finally, these identifications are given by a equivariantly piecewise morphism.
  
%each fiber can be identified with $\mathbb{R}^e$, this identification being a piecewise morphism on a refinement of the partition of the domain of the equivariantly piecewise morphism $\theta$. 

%The identification is furthermore an equivariantly piecewise morphism because, if the fiber of an arc $\gamma$ is expressed in local coordinates around $\theta(\gamma)(0)$, for any $g \in G$, these local coordinates are also local coordinates around $\theta(g \cdot \gamma)(0) = g \cdot \theta(\gamma)(0)$ and the fiber of $g \cdot \gamma$ is obtained from the fiber of $\gamma$ by applying the identity matrix (see proof of Proposition \ref{equivDLform}).

Therefore, there exists a finite partition of $h_n(\Delta_{e,n})$ into globally $G$-invariant $\mathcal{AS}$ subsets $(S_i)_{i = 1, \ldots, m}$ of $\mathcal{L}_n(\mathbb{R}^d,0)$, such that for any $i \in \{1, \ldots, m\}$, $h_n^{-1}(S_i)$ is a $G$-$\mathcal{AS}$ subset of $\mathcal{L}_n(M, h^{-1}(0))$, Nash isomorphic to $S_i \times \mathbb{R}^e$, the action of $G$ sending linearly a fiber on another. 
%$G$ acting diagonally on this product, the action of $G$ on $S_i$ being induced from the one on $\mathcal{L}_n(\mathbb{R}^d,0))$, and the one on the right-hand affine space being linear, with ${h_n}_{|h_n^{-1}(S_i)}$ corresponding under the equivariant Nash isomorphism to the equivariant projection $S_i \times \mathbb{R}^e \rightarrow \mathbb{R}^e$.

\end{proof}

\begin{proof}[Proof of Proposition \ref{Konequiv}]

We are now ready to prove the equivariant Kontsevich change of variables formula. We are going to compute the integral against the measure $\beta^G$ of the map 
$$\zeta : \sigma^{-1}(A) \rightarrow \mathbb{Z}[u,u^{-1}]~;~ \gamma \mapsto u^{-ord_t~jac~\sigma(\gamma)}$$
over $\sigma^{-1}(A)$, and show that it equals $\beta^G(A)$.
\\

We have
\begin{eqnarray*}
\int_{\sigma^{-1}(A)} \zeta d \beta^G & = & \sum_{c \in \mathbb{Z}[u,u^{-1}]} c \beta^G( \zeta^{-1}(c))  \mbox{~~~~(by definition of the integral)} \\
                                      & = & \sum_{1 \leq e \leq \rho} u^{-e} \beta^G( \sigma^{-1}(A) \cap \Delta_e)  \mbox{~~~~($ord_t~jac~\sigma(\gamma)$ is bounded on $\sigma^{-1}(A)$)}\\
                                      & = & \sum_{1 \leq e \leq \rho} u^{-e} u^{-(n+1)d} \beta^G(\pi_n(\sigma^{-1}(A) \cap \Delta_e))  \mbox{~~~~(for $n$ big enough and bigger than $2 \rho$)} \\
                                      & = & \sum_{1 \leq e \leq \rho} u^{-e} u^{-(n+1)d} \beta^G(\pi_n(\sigma^{-1}(A)) \cap \Delta_{e,n})  \mbox{~~~~($\sigma^{-1}(A)$ is stable)}
\end{eqnarray*}

Now fix $1 \leq e \leq \rho$. We have the equality of sets 
$$\pi_n(\sigma^{-1}(A)) \cap \Delta_{e,n} = \sigma_n^{-1}(\pi_n(A)) \cap \sigma_n^{-1}(\sigma_n(\Delta_{e,n})) = \sigma^{-1}\left(\pi_n(A) \cap \sigma_n(\Delta_{e,n})\right),$$ 
where $\sigma_n = \pi_n \circ \sigma$. The equality $\pi_n(\sigma^{-1}(A)) = \sigma_n^{-1}(\pi_n(A))$ comes from the stability of $A$ and the fact that $\pi_n \circ \sigma \circ \pi_n = \pi_n \circ \sigma$ on $\mathcal{L}(M, h^{-1}(0))$, and we use Lemma 2.12 of \cite{GF-ZF} to show that $\Delta_{e,n} = \sigma_n^{-1}(\sigma_n(\Delta_{e,n}))$ (recall that $n \geq 2e$).         

We then compute $\beta^G\left(\sigma_n^{-1}\left(\pi_n(A) \cap \sigma_n\left(\Delta_{e,n}\right)\right)\right)$ using the fact that, by previous Lemma \ref{lemequivKon}, $\sigma_n$ is an equivariantly piecewise trivial fibration over $\sigma_n(\Delta_{e,n})$ and more precisely that $\sigma_n^{-1}\left(\pi_n(A) \cap \sigma_n\left(\Delta_{e,n}\right)\right) \rightarrow \pi_n(A) \cap \sigma_n\left(\Delta_{e,n}\right)$ is a restriction of the $G$-equivariant vector bundle $\sigma_n^{-1}\left(\sigma_n\left(\Delta_{e,n}\right)\right) \rightarrow \sigma_n\left(\Delta_{e,n}\right)$ with fiber $\mathbb{R}^e$, so that 
%Keeping the same notations as in the end of the proof of Lemma \ref{lemequivKon}, we have 

$$\beta^G\left(\sigma_n^{-1}\left(\pi_n(A) \cap \sigma_n\left(\Delta_{e,n}\right)\right)\right) = u^e \beta^G\left(\pi_n(A) \cap \sigma_n(\Delta_{e,n})\right).$$

%\begin{eqnarray*}
%\beta^G\left(\sigma_n^{-1}\left(\pi_n(A) \cap \sigma_n\left(\Delta_{e,n}\right)\right)\right) & = & \sum_{i = 1}^m \beta^G\left(\sigma_n^{-1}(\pi_n(A) \cap S_i)\right) \mbox{ ~~~~(additivity of $\beta^G$)} \\
																			%											 & = & \sum_{i = 1}^m \beta^G\left((\pi_n(A) \cap S_i) \times \mathbb{R}^e\right) \\
																			%											 & = & \sum_{i = 1}^m u^e \beta^G(\pi_n(A) \cap S_i) \mbox{~~~~(Proposition \ref{prodaf}) } \\
																			%											 & = & u^e \beta^G\left(\pi_n(A) \cap \sigma_n(\Delta_{e,n})\right) \mbox{~~~~ (additivity of $\beta^G$)}
%\end{eqnarray*}

Consequently,
\begin{eqnarray*}
\int_{\sigma^{-1}(A)} \zeta d \beta^G & = & \sum_{1 \leq e \leq \rho} u^{-e} u^{-(n+1)d} \beta^G\left(\sigma_n^{-1}(\pi_n(A) \cap \sigma_n(\Delta_{e,n}))\right) \\
																			& = & \sum_{1 \leq e \leq \rho} u^{-(n+1)d} \beta^G\left(\pi_n(A) \cap \sigma_n(\Delta_{e,n})\right) \\
																			& = & u^{-(n+1)d} \beta^G\left( \pi_n(A) \cap (\sqcup_{1 \leq e \leq \rho} \sigma_n(\Delta_{e,n})) \right) \mbox{~~(the sets $\pi_n(\sigma(\Delta_{e}))$ are disjoint since $n > e$)} \\
																			&   & \\
																			& = & u^{-(n+1)d} \beta^G\left(\pi_n(A)\right) = \beta^G(A)
\end{eqnarray*}
Notice that we used the surjectivity of the map $\sigma_n : \mathcal{L}_n(M, \sigma^{-1}(0)) \rightarrow \mathcal{L}_n(\mathbb{R}^d,0)$, which comes from the arc lifting property of a real modification : see for instance \cite{FukuiPau}.
\end{proof}

Next, we state the Denef-Loeser formula for the equivariant zeta functions with signs. As in the non-equivariant case (see \cite{GF-MI} and \cite{GF-ZF}), we have to consider coverings of the spaces $E^0_{I}$. However, in our equivariant setting, it is necessary to consider coverings of the orbits of these spaces under the induced action of $G$.

\begin{prop} \label{equivDLformsigns} Keep the notations and assumptions of Proposition \ref{equivDLform}. We can write the equivariant zeta functions with signs of $f$ as a rational fraction in terms of its equivariant Nash modification $\sigma$. Precisely, we have the formula
$$Z^{G,\pm}_f(u,T) = \sum_{\underline{I} \in \Lambda/G} (u-1)^{|I|-1} \beta^G\left(\widetilde{E^{0,\pm}_{\underline{I}}} \cap \sigma^{-1}(0)\right) \prod_{i \in I} \frac{u^{-\nu_i} T^{N_i}}{1-u^{-\nu_i} T^{N_i}},$$
where, for $\underline{I} \in \Lambda/G$, $\widetilde{E^{0,+}_{\underline{I}}}$ and $\widetilde{E^{0,-}_{\underline{I}}}$ are $G$-coverings of $E^{0}_{\underline{I}}$. 
%, is the orbit of the covering $\widetilde{E^{0,+}_{I}}$, respectively $\widetilde{E^{0,-}_{I}}$, of $E^{0}_{I}$ under the induced action of $G$. 
\end{prop}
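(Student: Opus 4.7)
The plan is to adapt the proof of Proposition \ref{equivDLform} by refining the local analysis in step (iv) to track the sign of the leading coefficient of $f \circ \sigma \circ \gamma$. Define $\mathcal{Z}^{\pm}_n(f) := \pi_n^{-1}(A^{\pm}_n(f))$, $\mathcal{Z}^{\pm}_n(f \circ \sigma) := \sigma^{-1}(\mathcal{Z}^{\pm}_n(f))$ and $\mathcal{Z}^{\pm,\underline{I}}_{n,e}(f \circ \sigma) := \mathcal{Z}^{\pm}_n(f \circ \sigma) \cap \Delta_e \cap \pi_0^{-1}(E^0_{\underline{I}} \cap \sigma^{-1}(0))$. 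Since $A^{\pm}_n(f)$ is $G$-stable, the exact same argument as in steps (i)--(iii) applies: the equivariant Kontsevich change of variables of Proposition \ref{Konequiv} together with additivity of $\beta^G$ reduces the computation of $Z^{G,\pm}_f(u,T)$ to the evaluation of $\beta^G\bigl(\pi_n(\mathcal{Z}^{\pm,\underline{I}}_{n,e}(f \circ \sigma))\bigr)$.

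The refinement occurs in the local computation. Around a point $\gamma(0) \in E^0_I$, the normal crossings chart gives $f \circ \sigma \circ \varphi(y_1,\ldots,y_d) = \mathrm{unit}(y) \prod_{i \in I} y_i^{N_i}$. For an arc $\gamma$ with $\mathrm{ord}_t \gamma_i = k_i$ and leading coefficient $a_i$, the coefficient of $t^n$ in $f \circ \sigma \circ \gamma$ equals $\mathrm{unit}(0)\,\prod_{i \in I} a_i^{N_i}$. The condition "leading coefficient equals $\pm 1$" therefore cuts out, inside the factor $(\mathbb{R}^*)^{|I|}$ parametrizing the leading coefficients, a hypersurface which fibers over $E^0_I$. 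Globalizing, this hypersurface fibration is exactly the $G$-$\mathcal{AS}$ set $\widetilde{E^{0,\pm}_I}$ (the double cover of $E^0_I$ recording a compatible choice of sign of $\prod y_i^{N_i}$), and the local formulae from step (iv) transform equivariantly under $g \in G$ exactly as in the naive case (the constant $\lambda_g$ is nonzero, the base point moves along the orbit, the monomial structure is preserved by the equivariance of $f$ and by the hypothesis on the multiplicities). Summing over the orbit recovers $\widetilde{E^{0,\pm}_{\underline{I}}}$.

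Consequently, following the model of step (iv), one obtains an equivariant Nash isomorphism
\[
\pi_n(\mathcal{Z}^{\pm,\underline{I}}_{n,e}(f \circ \sigma)) \simeq \bigsqcup_{k \in A(n,e)} \bigl(\widetilde{E^{0,\pm}_{\underline{I}}} \cap \sigma^{-1}(0)\bigr) \times (\mathbb{R}^*)^{|I|-1} \times \prod_{i \in I} \mathbb{R}^{n-k_i} \times (\mathbb{R}^n)^{d-|I|},
\]
the exponent $|I|-1$ instead of $|I|$ reflecting the single scalar equation $\mathrm{unit}(0) \prod a_i^{N_i} = \pm 1$ absorbed into the covering. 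Applying Proposition \ref{prodaf} and Lemma \ref{lemprod} yields
\[
\beta^G\bigl(\pi_n(\mathcal{Z}^{\pm,\underline{I}}_{n,e}(f \circ \sigma))\bigr) = \sum_{k \in A(n,e)} (u-1)^{|I|-1}\, \beta^G\bigl(\widetilde{E^{0,\pm}_{\underline{I}}} \cap \sigma^{-1}(0)\bigr)\, u^{nd - \sum_{i \in I} k_i}.
\]
Substituting into the analogue of equation (\ref{DLcalcint}) and identifying the triple sum $\sum_n \sum_e \sum_k u^{-e - \sum k_i} T^n$ with the product $\prod_{i \in I} \frac{u^{-\nu_i} T^{N_i}}{1 - u^{-\nu_i} T^{N_i}}$ exactly as at the end of step (v) yields the stated Denef-Loeser formula.

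The main obstacle is step two: constructing the covering $\widetilde{E^{0,\pm}_{\underline{I}}}$ as a global $G$-$\mathcal{AS}$ set from the local sign data and checking that the $G$-action on the arc space induces the expected action on this cover, i.e.\ that the local identification of the leading-coefficient hypersurface transforms under $\delta_g$ through the nonzero constant $\lambda_g$ of step (iv) and that this is compatible across the orbit of $I$. Once this equivariant globalization is in place, the rest is a straightforward sign-refinement of the computations already performed in the naive case.
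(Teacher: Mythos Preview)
Your proposal is correct and follows essentially the same route as the paper: reduce via steps (i)--(iii) of Proposition~\ref{equivDLform}, cut out the sign hypersurface $W^{\pm}_{I,U,\varphi} = \{(z,\rho) : u(\varphi^{-1}(z))\prod_{i\in I} \rho_i^{N_i} = \pm 1\}$ inside $(E^0_I \cap U) \times (\mathbb{R}^*)^{|I|}$, factor it as $R^{\pm}_{I,U,\varphi} \times (\mathbb{R}^*)^{|I|-1}$ (the paper cites \cite{GF-MI}, Proposition~3.5, for this isomorphism), verify that the $G$-action $(z,\rho) \mapsto (\delta_g(z),\rho)$ is compatible with this splitting, and glue equivariantly to obtain $\widetilde{E^{0,\pm}_{\underline{I}}}$. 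One small caution: your description of $\widetilde{E^{0,\pm}_I}$ as a ``double cover'' is not quite right, since the paper defines it locally by $t^m = \pm 1/u(\varphi^{-1}(z))$ with $m = \gcd_{i \in I} N_i$, so the fiber over a point of $E^0_I$ has $0$, $1$, or $2$ points depending on the parity of $m$ and the sign of the unit; also note that the value of the unit depends on the base point $z$ (your ``$\mathrm{unit}(0)$'' notation obscures this), which is precisely what makes the cover potentially nontrivial.
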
   

\begin{rem}
We define the spaces $\widetilde{E^{0,\pm}_{\underline{I}}}$ in the proof below, making precise how the action of $G$ on $M$ induces an action on them.
\end{rem}

\begin{proof}
To prove the Denef-Loeser formula for equivariant zeta functions with signs, we follow the same first steps as in the proof of Proposition \ref{equivDLform}, and we are led to write
$$Z^{G,\pm}_f(u,T) = \sum_{n \geq 1} u^{- nd} T^n \sum_{e \leq c n} u^{-e} \sum_{\underline{I} \in \Lambda/G} \beta^G\left(\pi_n\left(\mathcal{Z}_{n,e}^{\pm,\underline{I}}(f \circ \sigma) \right)\right),$$
where each $\pi_n\left(\mathcal{Z}_{n,e}^{\pm,\underline{I}}(f \circ \sigma) \right)$ is the globally invariant set
$$\left\{\gamma \in \mathcal{L}_n(M, \sigma^{-1}(0))~|~\gamma(0) \in E^0_{\underline{I}} \cap \sigma^{-1}(0),~f \circ \sigma (\gamma(t)) = \pm t^n + \cdots ,~ord_t~jac~\sigma(\gamma(t)) = e\right\}.$$

%that is the orbit of the set
%$$\left\{\gamma \in \mathcal{L}_n(M, \sigma^{-1}(0))~|~\gamma(0) \in E^0_{I} \cap \sigma^{-1}(0),~f \circ \sigma (\gamma(t)) = \pm t^n + \cdots ,~ord_t~jac~\sigma(\gamma(t)) = e\right\},$$
%that we denote by $\pi_n\left(\mathcal{Z}_{n,e}^{\pm,I}(f \circ \sigma) \right)$, under the action of $G$ on $\mathcal{L}_n(M, \sigma^{-1}(0))$.
%\\

%In order to compute this quantity, we look for some equivariant Nash isomorphism given by the above conditions of value, orders and coefficients, the last two being expressed in local coordinates. Since we are dealing with relations involving arcs' coefficients in local coordinates, we will need to consider the gluing of the open sets on which are defined the associated local charts, taking into account the way $G$ is acting on $M$.
%\\

With the same notations as in the proof of Proposition \ref{equivDLform}, let $k \in A(n,e)$ and let $\gamma \in \pi_n\left(\mathcal{Z}_{n,e}^{\pm,\underline{I}}(f \circ \sigma) \right)$ such that $\gamma(0) \in E_I \cap U_x$ and for $i \in I$, $ord_t~\gamma_{j_{i}}(t) = k_i$. The condition $f \circ \sigma (\gamma(t)) = \pm t^n + \cdots$ can be expressed as 
$$u_x\left(\varphi_x^{-1}\left(\gamma(0)\right)\right) \prod_{i \in I} \rho_{j_i}^{N_i} = \pm 1,$$
where $\rho_{j_i}$ is the term of order $k_{i}$ in $\gamma_{j_i}(t)$. Denote
$$W^{\pm}_{I, U_x, \varphi_x} = \left\{(z, \rho) \in (E_I^0 \cap U_x) \times (\mathbb{R}^*)^{|I|}~|~u_x\left(\varphi_x^{-1}(z)\right) \prod_{i \in I} \rho_{j_i}^{N_i} = \pm 1\right\}$$
and notice that, since for $g \in G$, $f \circ \sigma \circ \delta_g = f \circ \sigma$, we have
\begin{eqnarray*}
u_x(y_1, \ldots, y_d) \prod_{i \in I} y_{j_i}^{N_i} 
%& = & f \circ \sigma ( \varphi_x(y_1, \ldots, y_d)) \\
%& = & f \circ \sigma \circ \delta_g ( \varphi_x(y_1, \ldots, y_d)) \\
& = & f \circ \sigma( \varphi_{g \cdot x}( \nu_{x,g} (y_1, \ldots, y_d))) \\
%& = & u_{g \cdot x} ( \nu_{x,g}(y_1, \ldots, y_d)) \prod_{i \in I} y_{j_{g \cdot i}}^{N_i} ( \nu_{x,g} (y_1, \ldots, y_d)) \\
& = & u_{g \cdot x} ( \nu_{x,g}(y_1, \ldots, y_d)) \prod_{i \in I} (\lambda_i y_{j_{i}})^{N_i} \\
%& = & \left(\prod_{i \in I} \lambda_i ^{N_i} \right) u_{g \cdot x} ( \nu_{x,g}(y_1, \ldots, y_d)) \prod_{i \in I} y_{j_{i}}^{N_i}
\end{eqnarray*}
where the constants $\lambda_i$, $i \in I$, are given by the matrix $A_{x,g}$. Therefore,
$$u_{g \cdot x} ( \nu_{x,g}(y_1, \ldots, y_d)) = \frac{1}{\left(\prod_{i \in I} \lambda_i ^{N_i} \right)} u_x(y_1, \ldots, y_d)$$
and in particular, the action of $g \in G$ on $M$ sends $W^{\pm}_{I, U_x, \varphi_x}$ on $W^{\pm}_{g \cdot I, U_{g \cdot x}, \varphi_{g \cdot x}}$.
\\

As a consequence, there is an equivariant Nash isomorphism between $\pi_n\left(\mathcal{Z}_{n,e}^{\pm,\underline{I}}(f \circ \sigma) \right)$ and the gluing of the sets
%, along the spaces $\bigcup_{g \in G} E^0_{g \cdot I} \cap U_{g \cdot x_r}$, $r = 1, \ldots, m$, of the gluings of the sets
$$\bigsqcup_{k \in A(n,e)} W^{\pm}_{g \cdot I, U_{g \cdot x_r}, \varphi_{g \cdot x}} \times \left(\prod_{i \in I} \mathbb{R}^{n-k_i}\right)\left(\mathbb{R}^n\right)^{d-|I|},$$
along the spaces $E^0_{g \cdot I} \cap U_{g \cdot x_r}$, $g \in G$, $r = 1, \ldots, m$.
\\

Thanks to the additivity of the equivariant virtual Poincar\'e series and Lemma \ref{lemprod}, we are then reduced to compute the equivariant virtual Poincar\'e series of the orbit of the set $W^{\pm}_{I, U_{x}, \varphi_{x}}$ (notice that, for $g_1,g_2 \in G$, the sets $\left(E^0_{g_1 \cdot I} \cap U_{g_1 \cdot x}\right)$ and $\left(E^0_{g_2 \cdot I} \cap U_{g_2 \cdot x}\right)$ are equal or do not intersect).
\\

Now, consider the isomorphism from $W^{\pm}_{I, U_x, \varphi_x}$ to $R^{\pm}_{I, U_x, \varphi_x} \times (\mathbb{R}^*)^{|I|-1}$ given in the proof of Proposition 3.5 of \cite{GF-MI}, with
$$R^{\pm}_{I, U_x, \varphi_x} := \left\{(z,t) \in E_I^0 \cap U_x \times \mathbb{R}~|~t^m = \pm \frac{1}{u_x\left(\varphi_x^{-1}(z)\right)} \right\},$$
where $m$ is the greatest common divisor of the $N_i$'s, $i \in I$. It is defined using integers $n_i$, $i \in I$ such that $\sum_{i \in I} n_i N_i = m$.
% (there is at least one of the integers $\frac{N_i}{m}$ which is odd). 

These isomorphisms are compatible with the action of $g \in G$ sending the element $(z,t,\kappa)$ of $R^{\pm}_{I, U_x, \varphi_x} \times (\mathbb{R}^*)^{|I|-1}$ to $(\delta_g(z), \frac{t}{\prod_{i \in I} \lambda_i^{N_i/m}}, g \cdot \kappa) \in R^{\pm}_{g \cdot I, U_{g \cdot x}, \varphi_{g \cdot x}} \times (\mathbb{R}^*)^{|I|-1}$, the $j$-th coordinate of $\kappa$ being sent to itself times $\left(\prod_{i \in I} \lambda_i^{N_i/m}\right)^{-n_j} \lambda_j$.
 
%Finally, the orbit of the set $W^{\pm}_{I, U_x, \varphi_x}$ can be identified with the orbit of $R^{\pm}_{I, U_x, \varphi_x}$ times $(\mathbb{R}^*)^{|I|-1}$.

Consequently, the equivariant virtual Poincar\'e series of the orbit of the set $W^{\pm}_{I, U_{x}, \varphi_{x}}$ is, using Lemma \ref{lemprod}, $(u-1)^{|I|-1}$ times the equivariant virtual Poincar\'e series of the orbit of the set $R^{\pm}_{I, U_x, \varphi_x}$.
\\

Thus, 
$$\beta^G\left(\pi_n\left(\mathcal{Z}_{n,e}^{\pm,\underline{I}}(f \circ \sigma) \right)\right) =  \sum_{k \in A(n,e)} (u - 1)^{|I|-1} \beta^G\left( \widetilde{E^{0,\pm}_{\underline{I}}}\right) u^{nd -  \sum_{i \in I}k_i}$$
where $\widetilde{E^{0,\pm}_{\underline{I}}}$ is the gluing of the sets $R^{\pm}_{g \cdot I, U_{g \cdot x_r}, \varphi_{g \cdot x_r}}$ along the spaces $E^0_{g \cdot I} \cap U_{g \cdot x_r}$, $g \in G$, $r = 1, \ldots, m$.
\\
%the gluing of the sets $W^{\pm}_{I, U, \varphi}$ along the sets $E_I^0 \cap U$ is $(u-1)^{|I|-1} \beta^G\left(\widetilde{E^{0,\pm}_{\underline{I}}}\right)$ (we use Lemma \ref{lemprod}), where $\widetilde{E^{0,\pm}_{\underline{I}}}$ is by definition the orbit under $G$ of the gluing $\widetilde{E^{0,\pm}_{I}}$ of the sets $R^{\pm}_{I, U, \varphi}$ along the sets $E_I^0 \cap U$.
%\\

Now, the end of the computation is the same as in step (v) of the proof of Proposition \ref{equivDLform} (notice that in the above arguments, we dropped the intersection with $\sigma^{-1}(0)$ for the sake of readability).

\end{proof}

\begin{rem} 
%\item Here, it is necessary to assume that $\sigma$ verifies all the conditions of the definition of an equivariant Nash modification.

We can also define the naive equivariant zeta function of the germ of an equivariant Nash function $f : (\mathbb{R}^d,0) \rightarrow (\mathbb{R},0)$ where the affine spaces $\mathbb{R}^d$ and $\mathbb{R}$ are both equipped with a linear action of $G$. Indeed, if $g \mapsto \kappa_g$ denotes the linear action of $G$ on $\mathbb{R}$, then, since~$G$ is finite, for all $g \in G$, $\kappa_g = \pm {\rm Id}_{\mathbb{R}}$. Therefore, the spaces of arcs $A_n(f)$ of $f$ are globally stable under the action of $G$ on $\mathcal{L}$. Furthermore, Denef-Loeser formula of Proposition \ref{equivDLform} is also valid for the naive equivariant zeta function of $f$.

In order to define equivariant zeta functions with signs for $f$, we have to consider the kernel~$H$ of the group morphism $g \mapsto \kappa_g$. Then the arc spaces $A_n^+(f)$ and $A_n^-(f)$ are globally stable under the restricted action of $H$ on $\mathcal{L}$ and we can define the equivariant zeta functions with signs of $f$ with respect to $H$, for which we have Denef-Loeser formula (Proposition \ref{equivDLformsigns}). 

We can also consider, for all $n \geq 1$, the equivariant virtual Poincar\'e series of the reunion of $A_n^+(f)$ and $A_n^-(f)$, which is a $G$-$\mathcal{AS}$-set, and gather this data into a new zeta function. In a subsequent work, we will study this zeta function, as well as its relation to the other equivariant zeta functions of such an equivariant Nash germ $f$.  

\end{rem}

\section{Equivariant zeta functions and equivariant blow-Nash equivalence} \label{secezfebne}

Denef-Loeser formulae for the equivariant zeta functions allow us to show that these latter are invariants for equivariant blow-Nash equivalence via equivariant blow-Nash isomorphisms :

\begin{theo} \label{equivzetafuncinv}
Let $\mathbb{R}^d$ be equipped with a linear action of $G$ and let $f$, $h : (\mathbb{R}^d,0) \rightarrow (\mathbb{R},0)$ be two invariant Nash germs. If $f$ and $h$ are $G$-blow-Nash equivalent via an equivariant blow-Nash isomorphism, then
$$Z^G_f(u,T) = Z^G_h(u,T) \mbox{~~~~and~~~~} Z^{G,\pm}_f(u,T) = Z^{G,\pm}_h(u,T)$$
\end{theo}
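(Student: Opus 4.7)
The plan is to apply the Denef--Loeser formulae of Propositions \ref{equivDLform} and \ref{equivDLformsigns} and show that the geometric ingredients on the right-hand sides agree for $f$ and $h$ via the equivariant Nash isomorphism $\Phi$. Concretely, start from the data of the equivariant blow-Nash equivalence: equivariant Nash modifications $\sigma_f : M_f \to \mathbb{R}^d$ and $\sigma_h : M_h \to \mathbb{R}^d$, the equivariant Nash isomorphism $\Phi : M_f \to M_h$ preserving multiplicities of jacobian determinants, and the equivariant homeomorphism $\phi$ on the base with $f = h \circ \phi$ and $\sigma_h \circ \Phi = \phi \circ \sigma_f$.

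As a preliminary reduction, I would enlarge both modifications by additional equivariant blow-ups (using equivariant resolution of singularities) so that $f \circ \sigma_f$ and $jac~\sigma_f$ have only normal crossings simultaneously, and similarly for $h$. The key observation is that any such sequence of blow-ups performed on $M_f$ along smooth equivariant centers can be transported to $M_h$ via $\Phi$, yielding matching blow-ups on the $h$-side; then $\Phi$ lifts to an equivariant Nash isomorphism between the refined modifications, and since $\Phi$ originally preserved the multiplicities of the jacobian determinants, so does the lift. Thus I may assume that both modifications meet the hypotheses of Proposition \ref{equivDLform} and that $\Phi$ still intertwines them.

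Now use the identity $f \circ \sigma_f = h \circ \phi \circ \sigma_f = h \circ \sigma_h \circ \Phi$. Writing the decompositions $(f \circ \sigma_f)^{-1}(0) = \bigcup_{j \in J} E_j^f$ and $(h \circ \sigma_h)^{-1}(0) = \bigcup_{j \in J} E_j^h$, the isomorphism $\Phi$ induces a bijection $j \mapsto \Phi(j)$ between indexing sets sending $E_j^f$ to $E_{\Phi(j)}^h$. By the displayed identity, $N_j^f = N_{\Phi(j)}^h$; by the assumption that $\Phi$ is an equivariant blow-Nash isomorphism, $\nu_j^f = \nu_{\Phi(j)}^h$; and because $\Phi$ is $G$-equivariant, the bijection descends to a bijection $\Lambda^f/G \to \Lambda^h/G$ between the orbit sets, with $\Phi$ restricting to an equivariant Nash isomorphism $E^0_{\underline{I}} \cap \sigma_f^{-1}(0) \xrightarrow{\sim} E^0_{\underline{\Phi(I)}} \cap \sigma_h^{-1}(0)$. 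Since $\beta^G$ is invariant under equivariant Nash isomorphisms, each factor in the Denef--Loeser formula matches term by term, yielding $Z^G_f(u,T) = Z^G_h(u,T)$.

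For the zeta functions with signs, the same scheme works once one observes that the coverings $\widetilde{E^{0,\pm}_I}$ are built out of the sign of the unit in the local expression $f \circ \sigma_f = \text{unit} \cdot \prod y_i^{N_i}$, hence are preserved under $\Phi$ since $f \circ \sigma_f = h \circ \sigma_h \circ \Phi$. Taking $G$-orbits gives an equivariant Nash isomorphism $\widetilde{E^{0,\pm}_{\underline{I}}} \cap \sigma_f^{-1}(0) \xrightarrow{\sim} \widetilde{E^{0,\pm}_{\underline{\Phi(I)}}} \cap \sigma_h^{-1}(0)$, and Proposition \ref{equivDLformsigns} yields $Z^{G,\pm}_f = Z^{G,\pm}_h$. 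The main technical point I expect to be the most delicate is the preliminary step of achieving simultaneous normal crossings of $f \circ \sigma_f$ and $jac~\sigma_f$ while maintaining an equivariant Nash isomorphism $\Phi$ on both sides that still preserves jacobian multiplicities; everything else is a bookkeeping translation of the non-equivariant argument of \cite{GF-ZF} through the $G$-action.
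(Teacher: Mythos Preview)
Your proposal is correct and follows essentially the same strategy as the paper's proof: first refine both modifications by further equivariant blow-ups to achieve simultaneous normal crossings of $f\circ\sigma$ and $jac\,\sigma$ while preserving the equivariant Nash isomorphism $\Phi$ and the jacobian multiplicities, then apply the Denef--Loeser formulae and match each ingredient (irreducible components, multiplicities $N_i$ and $\nu_i$, and $\beta^G$ of the strata) via $\Phi$. Your treatment of the preliminary reduction (transporting the blow-up centers through $\Phi$) and of the signed coverings $\widetilde{E^{0,\pm}_I}$ is in fact more explicit than the paper's, which simply asserts that the procedure yields a suitable $\widetilde{\Phi}$.
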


\begin{proof}
Let us keep the notations of the definition \ref{defequivbln} of $G$-blow-Nash equivalence. 

%We can compose the equivariant Nash modification $\sigma_f$, respectively $\sigma_h$, with a sequence of equivariant blowings-up with $G$-globally preserved smooth Nash centers and exceptional divisors having only normal crossings with the ones of $\sigma_f$, resp. $\sigma_h$, to obtain a new equivariant Nash modification $\widetilde{\sigma}_f : (\widetilde{M}_f , \widetilde{\sigma}_f^{-1}(0)) \rightarrow (\mathbb{R}^d,0)$ of $f$, resp. $\widetilde{\sigma}_h : (\widetilde{M}_h , \widetilde{\sigma}_h^{-1}(0)) \rightarrow (\mathbb{R}^d,0)$ of $h$, so that the jacobian determinant $jac~\widetilde{\sigma}_f$, resp. $jac~\widetilde{\sigma}_h$, has only normal crossings. This procedure provides an equivariant Nash isomorphism $\widetilde{\Phi} : (\widetilde{M}_f , \widetilde{\sigma}_f^{-1}(0)) \rightarrow (\widetilde{M}_h , \widetilde{\sigma}_h^{-1}(0))$, inducing the equivariant blow-Nash isomorphism $\Phi$ and the equivariant blow-Nash homeomorphism $\phi$ and preserving the multiplicities of the jacobian determinants of $\widetilde{\sigma}_f$ and $\widetilde{\sigma}_h$ along their exceptional divisors.
%\\

We then apply Proposition \ref{equivDLform}, respectively Proposition \ref{equivDLformsigns}, to both $f$ and $h$ and the expressions of the naive equivariant zeta functions, resp. equivariant zeta functions with signs, of $f$ and $h$ given by the Denef-Loeser formula are equal because
\begin{itemize}
	\item $\Phi$ sends the irreducible components of $(f \circ \sigma_f)^{-1}(0)$ onto the irreducible components of $(h \circ \sigma_h)^{-1}(0)$,
	\item the equivariant virtual Poincar\'e series is invariant under equivariant Nash isomorphisms,
	\item the multiplicities $N$ are preserved by $\Phi$ thanks to the commutativity of the diagram defining $G$-blow-Nash equivalence (see Definition \ref{defequivbln}), and the multiplicities $\nu$ are preserved by $\Phi$ because it is an equivariant blow-Nash isomorphism.
\end{itemize}

\end{proof}

%\begin{rem} If, in the definition of an equivariant Nash modification in section \ref{equivbnequiv}, we retrieve conditions 4, 5 and conditions 6 (b), (d), (e) and (f), the naive equivariant zeta function is still an invariant of the equivariant blow-Nash equivalence via an equivariant blow-Nash isomorphism (see remark \ref{remcond}).
%\end{rem}

\begin{ex} \label{exgermnonequivbln} Consider the affine plane $\mathbb{R}^2$ with coordinates $(x,y)$, equipped with the $\mathbb{Z}/2\mathbb{Z}$-action $(x,y) \mapsto (-x,y)$. Let $f$ and $h$ be the Nash germs at $(0,0)$ defined by 
$$f(x,y) = y^4 - x^2~~~~;~~~~ h(x,y) = x^4 - y^2.$$
They are Nash equivalent via the Nash isomorphism $\Phi : \mathbb{R}^2 \rightarrow \mathbb{R}^2~;~(x,y) \mapsto (y,x)$. In particular, they are blow-Nash equivalent via a blow-Nash isomorphism and consequently 
$$Z_f(u,T) = Z_h(u,T)$$
(the zeta functions are invariants for blow-Nash equivalence via blow-Nash isomorphisms : see \cite{GF-ZF}).

However, notice that $\Phi$ is not equivariant with respect to the considered action of $G := \mathbb{Z}/2\mathbb{Z}$ on $\mathbb{R}^2$. We compute the naive equivariant zeta functions of the invariant Nash germs $f$ and $h$ and show that they are not $G$-blow-Nash equivalent via an equivariant blow-Nash isomorphism, using Theorem \ref{equivzetafuncinv}. 
\\

Let $\sigma_1$ be the equivariant blowing-up of $\mathbb{R}^2$ at the origin and let $(\mathbb{R}^2,(X,Y))$ be the chart of the blowing-up in which $\sigma_1$ is given by $\sigma_1(X,Y) = (X Y,Y)$, the action of $G$ on this chart being given by $(X,Y) \mapsto (-X,Y)$. In this chart, we have $f \circ \sigma_1 (X,Y) = Y^2 (Y^2 - X^2)$.

By a second equivariant blowing-up $\sigma_2$ given by $\sigma_2(W,Z) = (W,W Z)$ in the chart $(\mathbb{R}^2,(W,Z))$, we obtain a function with only normal crossings 
$$f \circ \sigma(W,Z) = W^4 Z^2 (Z - 1)(Z + 1),$$
with $\sigma := \sigma_1 \circ \sigma_2$. It is in particular invariant under the action of $G$ on the chart, given by $(W,Z) \mapsto (-W,-Z)$.
    
The fiber $f \circ \sigma^{-1}(0)$ have four irreducible components, given in the chart $(\mathbb{R}^2,(W,Z))$ by 
$$E_1 = \{Z = 0\},~E_2 = \{W = 0\},~E_3 = \{ Z = 1 \},~E_4 = \{ Z = - 1 \}.$$ 
The exceptional divisors $E_1$ and $E_2$, both isomorphic to a circle, intersect at a $G$-fixed point and $E_2$ intersects the irreducible components $E_3$ and $E_4$ of the strict transform of $f$ at two points exchanged by the action. 

Therefore, using the equivariant Denef-Loeser formula, since $N_1 = 2$, $\nu_1 = 1$, $N_2 = 4$, $\nu_2 = 2$ and $N_3 = N_4 = 1$, $\nu_3 = \nu_4 = 0$, we have after computation (use Example \ref{exevps})
\begin{eqnarray*}
Z^G_f(u,T) = \frac{T^2}{1-u^{-2} T^2} + (u^2 - u + 1) \frac{u^{-3} T^4}{1-u^{-3} T^4} & + &(u-1)\frac{u^{-1} T^2}{1-u^{-2} T^2} \frac{u^{-3} T^4}{1-u^{-3} T^4} \\
& + &(u-1)^2 \frac{u^{-3} T^4}{1-u^{-3} T^4} \frac{u^{-1} T^1}{1-u^{-1} T^1}.
\end{eqnarray*}
\\

In order to compute the naive equivariant zeta function of $h$, we consider the equivariant blowings-up $\sigma_1$ and $\sigma_2$ in the respective charts $(\mathbb{R}^2,(U,T))$ and $(\mathbb{R}^2,(R,S))$, where they are given by $\sigma_1(U,T) = (U,UT)$ and $\sigma_2(R,S) = (RS,S)$. The actions of $G$ on these charts are given by $(U,T) \mapsto (-U,-T)$ and $(R,S) \mapsto (R,-S)$, and we have
$$h \circ \sigma(R,S) = S^4 R^2 (R - 1) (R + 1)$$ 
in the chart $(\mathbb{R}^2,(R,S))$.

The four irreducible components of $(h \circ \sigma)^{-1}(0)$ are given by 
$$E_1' = \{R = 0\},~E_2' = \{S = 0\},~E_3' = \{ R = 1 \},~E_4' = \{ R = - 1 \}$$
and the exceptional divisor $E_2'$ intersects the strict transform of $h$ at two points that are both fixed by the action of $G$. Thus, 
\begin{eqnarray*}
Z^G_h(u,T) =  \frac{T^2}{1-u^{-2} T^2} + (u^2 - 2 u) \frac{u^{-3} T^4}{1 - u^{-3} T^4} & + & (u-1) \frac{u^{-1} T^2}{1-u^{-2} T^2} \frac{u^{-3} T^4}{1-u^{-3} T^4} \\
& + &(2 u(u-1)) \frac{u^{-3} T^4}{1-u^{-3} T^4} \frac{u^{-1} T^1}{1-u^{-1} T^1}. 
\end{eqnarray*}
 
In particular, 
$$Z^G_f(u,T) \neq  Z^G_h(u,T)$$
and therefore, with respect to the considered $\mathbb{Z}/2\mathbb{Z}$-action on $\mathbb{R}^2$, $f$ and $h$ are not $G$-blow-Nash equivalent via an equivariant blow-Nash isomorphism, by Theorem \ref{equivzetafuncinv}.
\end{ex}

\section{Examples} \label{examples}

In this section, we compute the equivariant zeta functions of several invariant Nash germs under linear actions of $G := \mathbb{Z}/2\mathbb{Z}$, using Denef-Loeser formula (Propositions \ref{equivDLform} and \ref{equivDLformsigns}). 

\begin{ex}[see also Example 3.6 of \cite{GF-MI}] \label{simpleexequivzetafuncDLform} Equip the affine plane $\mathbb{R}^2$ with any involution of the type $s : (x,y) \mapsto (\epsilon x , \epsilon' y)$ with $\epsilon, \epsilon' \in \{-1, 1\}$ 

\begin{itemize}
	\item Consider the invariant Nash germ $f : (x,y) \mapsto x^2 + y^2$ at $(0,0)$. The equivariant blowing-up of the plane at the origin gives an equivariant resolution $\sigma$ of the singularities of $f$ and the fiber $(f \circ \sigma)^{-1}(0)$ consists just in the exceptional divisor $E_1$ of the blowing-up equipped with the induced non-free action of $G$, and we obtain (see Example \ref{exevps})
$$Z_f^G(u,T) = (u-1) \left(u + 2 \frac{u}{u-1}\right) \frac{u^{-2} T^2}{1-u^{-2} T^2} = (u^2 + u)\frac{u^{-2} T^2}{1-u^{-2} T^2}.$$

Now, since $f$ is a positive function, we know that $Z_f^{G,-}(u,T) = 0$, and, since $\widetilde{E^{0,+}_{\{1\}}}$ is the boundary of a M\"obius band equipped with a non-free action of $G$,
$$Z_f^{G,+}(u,T) = \left(u + 2 \frac{u}{u-1}\right) \frac{u^{-2} T^2}{1-u^{-2} T^2}.$$

\item Consider the invariant Nash germ $h : (x,y) \mapsto - x^2 - y^4$. Two successive equivariant blowings-up provide an equivariant resolution of singularities $\tau$ of $h$. The two exceptional divisors $E_1'$ and $E_2'$, intersecting at one $G$-fixed point, constitute the fiber $(f \circ \tau)^{-1}(0)$ and we have
$$Z_f^G(u,T) = u^2 \frac{u^{-2} T^2}{1-u^{-2} T^2} + u^2 \frac{u^{-3} T^4}{1-u^{-3} T^4} + (u-1) u \frac{u^{-2} T^2}{1-u^{-2} T^2}\frac{u^{-3} T^4}{1-u^{-3} T^4}.$$

The sets $\widetilde{E^{0,+}_{\{1\}}}$ and $\widetilde{E^{0,+}_{\{2\}}}$ are both the boundary of a M\"obius band minus two points fixed under the induced action of $G$. Consequently,
$$Z_f^{G,-}(u,T) = u \frac{u^{-2} T^2}{1-u^{-2} T^2} + u \frac{u^{-3} T^4}{1-u^{-3} T^4} + 2 u \frac{u^{-2} T^2}{1-u^{-2} T^2}\frac{u^{-3} T^4}{1-u^{-3} T^4}$$
($Z_h^{G,+}(u,T) = 0$ since $h$ is negative).
\end{itemize}
\end{ex}

In Example \ref{exsingbound} below, the affine plane $\mathbb{R}^2$ is equipped with the action of $G$ given by the involution $s : (x,y) \mapsto (-x,y)$. We compute the naive equivariant zeta functions of the invariant Nash germs $f$, $g_k$, $k \geq 2$, and $h_k$, $k \geq 2$, at the origin of $\mathbb{R}^2$ given by
$$f(x,y) = \pm x^4 + y^3~~;~~g_k(x,y) = \pm x^{2k} \pm y^2~~;~~h_k(x,y) = x^2 y \pm y^k.$$
These germs are induced from the normal forms of the simple boundary singularities of manifolds with boundary, by unfolding the positive abscissa half-plane along the ordinate axis : see \cite{AGZV}. We will study the classification of the simple boundary singularities of Nash manifolds with boundary up to (equivariant) blow-Nash equivalence in a subsequent work.

\begin{ex} \label{exsingbound}

\begin{enumerate}
\item We begin with $f$. Consider the equivariant blowing-up $\sigma_1$ at the origin given in the chart $(\mathbb{R}^2, (X_1, Y_1))$ by $\sigma_1(X_1, Y_1) = (X_1, X_1 Y_1)$. The action of $G$ on the blowing-up is given in this chart by $s_1 : (X_1, Y_1) \mapsto (-X_1, -Y_1)$ and we have
$$f \circ \sigma_1 (X_1, Y_1) = X_1^3 (Y_1^3 \pm X_1).$$

We do three more successive blowings-up $\sigma_2$, $\sigma_3$ and $\sigma_4$ each given in the chart $(\mathbb{R}^2, (X_i, Y_i))$ by $\sigma_i(X_i, Y_i) = (X_i Y_i, X_i)$. The action of $G$ on the last blowing-up is given in the chart $(\mathbb{R}^2, (X_4, Y_4))$ by $s_4 : (X_4, Y_4) \mapsto (X_4, -Y_4)$ and we have
$$f \circ \sigma (X_4, Y_4) = X_4^3 Y_4^{12} (1 \pm X_4)$$
where $\sigma := \sigma_1 \circ \cdots \circ \sigma_4$.

The (equivariant) resolution tree of $f$ is the following, where $E_i(N_i, \nu_i)$ denotes the exceptional divisor of the blowing-up $\sigma_i$ with $N_i = mult_{E_i}~f \circ \sigma_i = mult_{E_i}~f \circ \sigma$ and $\nu_i = 1 + mult_{E_i}~jac~\sigma_i = 1 + mult_{E_i}~jac~\sigma$ :

\psscalebox{1.0 1.0}
{
\begin{pspicture}(0,-3.2100153)(13.8,3.2100153)
\psline[linecolor=black, linewidth=0.04](3.6,2.0100152)(3.6,-0.78998476)
\psline[linecolor=black, linewidth=0.04](2.4,0.81001526)(12.4,0.81001526)(12.4,0.81001526)
\psline[linecolor=black, linewidth=0.04](10.4,2.0100152)(10.4,-3.1899848)(10.4,-3.1899848)
\psline[linecolor=black, linewidth=0.04](8.0,-1.5899848)(12.4,-1.5899848)(12.4,-1.5899848)
\pscustom[linecolor=black, linewidth=0.04]
{
\newpath
\moveto(3.2,2.8100152)
}
\rput[bl](3.02,2.1700153){$E_2(4,3)$}
\rput[bl](9.74,2.1900153){$E_4(12,7)$}
\rput[bl](0.84,0.65001523){$E_3(8,5)$}
\rput[bl](12.64,-1.7499847){$E_1(3,2)$}
\psline[linestyle=dashed, linecolor=black, linewidth=0.04](8.0,-0.78998476)(12.4,-0.78998476)(12.4,-0.78998476)
\psline[linestyle=dashed, linecolor=black, linewidth=0.04](8.0,-2.3899848)(12.4,-2.3899848)
\rput[bl](12.68,-0.86998475){$E_-$}
\rput[bl](12.74,-2.5099847){$E_+$}
\end{pspicture}
}

The sets $E_-$ and $E_+$ are the respective strict transforms of $f_-(x,y) = -x^3 + y^4$ and $f_+(x,y) = x^3 + y^4$. In both cases, the action of $G$ globally stabilizes the strict transform, the exceptional divisors and the intersections. Then Denef-Loeser formula provides the naive equivariant zeta function of $f$ (using also Example \ref{exevps}) :
\begin{eqnarray*}
Z_f^G(u,T) & = & \sum_{i = 1}^3 \frac{u^{-\nu_i+2} T^{N_i}}{1-u^{-\nu_i} T^{N_i}} + (u-1) \frac{u^{-6} T^{12}}{1-u^{-7} T^{12}} + (u-1) \frac{u^{-2} T^4}{1-u^{-3} T^4} \frac{u^{-5} T^8}{1-u^{-5} T^8} \\
& + & (u-1) \frac{u^{-6} T^{12}}{1-u^{-7} T^{12}} \left[ \frac{u^{-5} T^8}{1-u^{-5} T^8} + \frac{u^{-2} T^3}{1-u^{-2} T^3} + \frac{u^{-1} T^1}{1-u^{-1} T^1} \right]
\end{eqnarray*}

\item We now compute the naive equivariant zeta function of $g_k$ for $k \geq 3$. By $k$ successive equivariant blowings-up $\sigma_i$, $i = 1, \ldots, k$, given in charts $(\mathbb{R}^2, (X_i,Y_i))$ by $\sigma_i : (X_i, Y_i) \mapsto (X_i, X_i Y_i)$, we resolve the singularities of $g_k$ :
$$g_k \circ \sigma (X_k, Y_k) = X_k^{2k} ( \pm 1 \pm Y_k^2)$$ 
with $\sigma := \sigma_1 \circ \cdots \circ \sigma_k$.

First, we deal with the case $g_k(x,y) = \pm(x^{2k} - y^2)$. In this case, we have the following resolution tree for $g_k$ :

\psscalebox{1.0 1.0}
{
\begin{pspicture}(0,-3.845)(13.42,3.845)
\psline[linecolor=black, linewidth=0.04](1.32,2.155)(4.12,2.155)
\psline[linecolor=black, linewidth=0.04](2.12,3.355)(2.12,1.355)(2.12,1.355)
\psline[linecolor=black, linewidth=0.04](3.72,2.955)(3.72,0.155)
\psline[linecolor=black, linewidth=0.04](3.32,0.555)(5.72,0.555)(5.72,0.555)
\psline[linecolor=black, linewidth=0.04](5.32,1.355)(5.32,-0.245)(5.32,-0.245)
\psline[linecolor=black, linewidth=0.04](8.12,-0.245)(8.12,-2.645)(8.12,-2.645)
\psline[linecolor=black, linewidth=0.04](6.92,-1.845)(12.12,-1.845)(12.12,-1.845)
\psline[linecolor=black, linewidth=0.04](11.32,-1.045)(11.32,-3.845)
\psline[linestyle=dashed, linecolor=black, linewidth=0.04](9.72,-2.645)(12.92,-2.645)(12.92,-2.645)
\psline[linestyle=dashed, linecolor=black, linewidth=0.04](9.72,-3.445)(12.92,-3.445)(12.92,-3.445)
\rput[bl](0.0,1.975){$E_2(4,3)$}
\rput[bl](1.54,3.495){$E_1(2,2)$}
\rput[bl](3.22,3.095){$E_3(6,4)$}
\rput[bl](2.02,0.375){$E_4(8,5)$}
\rput[bl](4.64,1.515){$E_5(10,6)$}
\rput[bl](6.64,-0.125){$E_{k-2}(2k-4,k-1)$}
\rput[bl](4.26,-2.045){$E_{k-1}(2k-2,k)$}
\rput[bl](10.34,-0.905){$E_k(2k,k+1)$}
\rput[bl](13.16,-3.105){$E$}
\psline[linestyle=dotted, linecolor=black, linewidth=0.04](5.72,0.155)(7.72,-1.445)(7.72,-1.445)
\end{pspicture}
}

In the chart $(\mathbb{R}^2, (X_k, Y_k))$, the action of $G$ is given by
$$s_k : (X_k, Y_k) \mapsto \begin{cases} (-X_k, -Y_k) \mbox{ if $k$ is odd,} \\ (-X_k, Y_k) \mbox{ if $k$ is even.} \end{cases}$$
Thus, the intersection points of the strict transform $E$ of $g_k$ with the exceptional divisor $E_k$ are exchanged under the involution if $k$ is odd and fixed if $k$ is even.

Consequently, after computation we obtain 
\begin{eqnarray*}
Z_{g_k}^G(u,T) & = & \frac{T^{2}}{1-u^{-2} T^{2}} + (u-1) \left[ \sum_{j = 2}^{k-1} \frac{u^{-j} T^{2j}}{1-u^{-(j+1)} T^{2j}} + \sum_{j=1}^{k-1} \frac{u^{-j} T^{2j}}{1-u^{-(j+1)} T^{2j}} \frac{u^{-(j+2)} T^{2j+2}}{1-u^{-(j+2)} T^{2j+2}}\right]\\
& + & \Lambda_k(u,T)
\end{eqnarray*}
with
$$\Lambda_k(u,T) = \begin{cases} (u^2 - u +1) \frac{u^{-(k+1)} T^{2k}}{1-u^{-(k+1)} T^{2k}} + (u-1)^2 \frac{u^{-(k+1)} T^{2k}}{1-u^{-(k+1)} T^{2k}} \frac{u^{-1} T^1}{1-u^{-1} T^1} \mbox{ if $k$ is odd, }\\
(u-2) \frac{u^{-k} T^{2k}}{1-u^{-(k+1)} T^{2k}} + 2 u (u-1) \frac{u^{-(k+1)} T^{2k}}{1-u^{-(k+1)} T^{2k}} \frac{u^{-1} T^1}{1-u^{-1} T^1} \mbox{ if $k$ is even.} 
\end{cases}
$$

In the case $g_k(x,y) = \pm (x^{2k} + y^2)$, there is no strict transform and the naive equivariant zeta function of $g_k$ is given by the same formula as above with
$$\Lambda_k(u,T) = \frac{u^{-k+1} T^{2k}}{1-u^{-(k+1)} T^{2k}}.$$

\item Let us next consider the invariant Nash germ $h_k$, $k \geq 3$. We first look at the case $k$ odd. If $k = 2p+1$ with $p \in \mathbb{N}$, then $p$ successive equivariant blowings-up $\sigma_i : (X_i, Y_i) \rightarrow (X_i Y_i, Y_i)$ provide the function with only normal crossings
$$h_k \circ \sigma_1 \circ \cdots \circ \sigma_p(X_p, Y_p) = Y_p^k(X_p^2 \pm 1),$$   
together with the following resolution tree

\psscalebox{1.0 1.0}
{
\begin{pspicture}(0,-3.635)(11.74,3.635)
\psline[linecolor=black, linewidth=0.04](1.34,2.345)(3.72,2.345)
\psline[linecolor=black, linewidth=0.04](3.34,3.145)(3.34,0.745)(3.34,0.745)
\psline[linecolor=black, linewidth=0.04](2.54,1.145)(5.34,1.145)
\psline[linecolor=black, linewidth=0.04](4.94,1.945)(4.94,0.345)
\psline[linecolor=black, linewidth=0.04](6.94,-0.855)(9.34,-0.855)
\psline[linecolor=black, linewidth=0.04](8.94,-0.055)(8.94,-2.855)
\psline[linecolor=black, linewidth=0.04](8.14,-2.055)(11.74,-2.055)
\psline[linestyle=dashed, linecolor=black, linewidth=0.04](10.94,-1.255)(10.94,-3.255)
\rput[bl](0.0,2.165){$E_1(3,2)$}
\rput[bl](2.76,3.285){$E_2(5,3)$}
\rput[bl](1.16,0.965){$E_3(7,4)$}
\rput[bl](4.42,2.125){$E_4(9,5)$}
\rput[bl](2.96,-1.055){$E_{p-2}(2p-3,p-1)$}
\rput[bl](7.34,0.165){$E_{p-1}(2p-1,p)$}
\rput[bl](6.02,-2.255){$E_p(k, p+1)$}
\rput[bl](10.38,-3.635){$E$}
\psline[linestyle=dashed, linecolor=black, linewidth=0.04](10.14,-1.255)(10.14,-3.255)
\psline[linestyle=dotted, linecolor=black, linewidth=0.04](5.34,0.745)(8.14,-0.455)
\end{pspicture}
}

(the above resolution tree corresponds to the case $h_k(x,y) = x^2 y - y^k$ ; in the case $h_k(x,y) = x^2 y + y^k$, there is no strict stransform). In the chart $(\mathbb{R}^2, (X_p,Y_p))$, the action of $G$ is given by $s_p : (X_p, Y_p) \mapsto (-X_p, Y_p)$, hence exchanges the intersection points of the strict transform $E$ of $h_k$ with the exceptional divisor $E_p$. 

If now we suppose $k = 2p$ with $p \in \mathbb{N}\setminus \{0,1\}$, by doing the same first $p-1$ successive equivariant blowings-up $\sigma_1, \ldots, \sigma_{p-1}$ as above, regarded in the same charts, we obtain
$$h_k \circ \sigma_1 \circ \cdots \circ \sigma_{p-1}(X_{p-1}, Y_{p-1}) = Y_{p-1}^{2p-1} (X_{p-1}^2 \pm Y_{p-1}),$$

We obtain the equivariant resolution of singularities of $h_k$ by two more equivariant blowings-up, getting the following tree :
\\
\psscalebox{1.0 1.0}
{
\begin{pspicture}(0,-3.685)(11.8,3.685)
\psline[linecolor=black, linewidth=0.04](1.34,2.395)(3.72,2.395)
\psline[linecolor=black, linewidth=0.04](3.34,3.195)(3.34,0.795)(3.34,0.795)
\psline[linecolor=black, linewidth=0.04](2.54,1.195)(5.34,1.195)
\psline[linecolor=black, linewidth=0.04](4.94,1.995)(4.94,0.395)
\psline[linecolor=black, linewidth=0.04](6.94,-0.805)(9.34,-0.805)
\psline[linecolor=black, linewidth=0.04](8.94,-0.005)(8.94,-2.805)
\psline[linecolor=black, linewidth=0.04](8.14,-2.005)(11.74,-2.005)
\psline[linestyle=dashed, linecolor=black, linewidth=0.04](11.26,-1.205)(11.26,-3.205)
%\psline[linestyle=dashed, linecolor=black, linewidth=0.04](11.34,-1.205)(11.34,-3.205)
\rput[bl](0.0,2.215){$E_1(3,2)$}
\rput[bl](2.76,3.335){$E_2(5,3)$}
\rput[bl](1.16,1.015){$E_3(7,4)$}
\rput[bl](4.42,2.175){$E_4(9,5)$}
\rput[bl](2.96,-1.005){$E_{p-2}(2p-3,p-1)$}
\rput[bl](7.34,0.215){$E_{p-1}(2p-1,p)$}
\rput[bl](5.44,-2.225){$E_{p+1}(2k, k+1)$}
\rput[bl](9.96,-3.685){$E_+$}
\psline[linestyle=dashed, linecolor=black, linewidth=0.04](10.14,-1.205)(10.14,-3.205)
\psline[linestyle=dotted, linecolor=black, linewidth=0.04](5.34,0.795)(8.14,-0.405)
\psline[linecolor=black, linewidth=0.04](10.72,-0.885)(10.72,-3.585)
\rput[bl](10.02,-0.765){$E_p(k,p+1)$}
\rput[bl](11.08,-3.685){$E_-$}
\end{pspicture}
}

(where $E_-$ and $E_+$ are the respective strict transforms of $x^2 y - y^k$ and $x^2 y + y^k$), all the intersection points being fixed by the action of $G$.

\item Finally, we take a look at the germs $g_2$ and $h_2$. By applying the same two equivariant blowings-up regarded in the same charts, we obtain isomorphic resolution trees for $h_2$ and $g_2 : (x,y) \mapsto \pm (x^4 - y^2)$ with same multiplicities, the action of $G$ fixing all intersection points. Notice that the case $g_2(x,y) = \pm (x^4 + y^2)$ is treated in Example \ref{simpleexequivzetafuncDLform} up to equivariant Nash equivalence.

\end{enumerate}

\end{ex}

 \vspace{0.5cm}
Fabien PRIZIAC
\\
Department of Mathematics
\\
Faculty of Science
\\
Saitama University
\\
255 Shimo-Okubo, Sakura-ku, Saitama City
\\
338-8570
\\
JAPAN
\\
priziac.fabien@gmail.com

\end{document}